\documentclass[11pt]{article}
\usepackage{latexsym,amsfonts,amssymb,amsmath,amsthm}
\usepackage{graphicx}
\usepackage{url}
\usepackage[usenames,dvipsnames]{color}
\usepackage{ulem,comment}
\usepackage{tcolorbox}
\usepackage[framed]{matlab-prettifier}
\usepackage{relsize,exscale}
\usepackage{bigints}
\usepackage{caption}
\usepackage{subcaption}
\usepackage[ruled,vlined]{algorithm2e} 
\usepackage{color}
\usepackage{array}

\usepackage{float}
\usepackage{lipsum} 
\usepackage{fancyhdr}
\begin{document}

\newtheorem{theorem}{Theorem}[section]
\newtheorem{proposition}[theorem]{Proposition}
\newtheorem{definition}{Definition}[section] 
\newtheorem{example}{Example}[section]
\newtheorem{exercise}{Exercise}[section]
\newtheorem{application}{Application}[section]
\newtheorem{homework}{Homework}
\newtheorem{practice}{Practice}
\newtheorem{corollary}{Corollary}[section]
\newtheorem{lemma}[theorem]{Lemma}
\newtheorem{assumption}[theorem]{Assumption}
\newtheorem{remark}{Remark}[section]
\newtheorem{notation}[theorem]{Notation}
\numberwithin{equation}{section}

\newcommand{\stk}[2]{\stackrel{#1}{#2}}
\newcommand{\dwn}[1]{{\scriptstyle #1}\downarrow}
\newcommand{\upa}[1]{{\scriptstyle #1}\uparrow}
\newcommand{\nea}[1]{{\scriptstyle #1}\nearrow}
\newcommand{\sea}[1]{\searrow {\scriptstyle #1}}
\newcommand{\csti}[3]{(#1+1) (#2)^{1/ (#1+1)} (#1)^{- #1
 / (#1+1)} (#3)^{ #1 / (#1 +1)}}
\newcommand{\RR}[1]{\mathbb{#1}}

\newcommand{\rd}{{\mathbb R^d}}
\newcommand{\ep}{\varepsilon}
\newcommand{\rr}{{\mathbb R}}
\newcommand{\alert}[1]{\fbox{#1}}
\newcommand{\eqd}{\sim}
\def\p{\partial}
\def\R{{\mathbb R}}
\def\N{{\mathbb N}}
\def\Q{{\mathbb Q}}
\def\C{{\mathbb C}}
\def\l{{\langle}}
\def\r{\rangle}
\def\t{\tau}
\def\k{\kappa}
\def\a{\alpha}
\def\la{\lambda}
\def\De{\Delta}
\def\de{\delta}
\def\ga{\gamma}
\def\Ga{\Gamma}
\def\ep{\varepsilon}
\def\eps{\varepsilon}
\def\si{\sigma}
\def\Re {{\rm Re}\,}
\def\Im {{\rm Im}\,}
\def\E{{\mathbb E}}
\def\P{{\mathbb P}}
\def\Z{{\mathbb Z}}
\def\D{{\mathbb D}}
\newcommand{\ceil}[1]{\lceil{#1}\rceil}

\title{An adjoint-free integral feedback method for a parabolic inverse source problem with conditional stability}
\author{
  Sedar Ngoma\footnote{Department of Mathematics, State University of New York, Geneseo, NY 14454 (ngoma@geneseo.edu)}
  }
\date{} 
\maketitle

\medskip\noindent{\bf Key words.}
Inverse source problem; parabolic equations;
conditional stability; adjoint-free integral feedback method;
iterative regularization; Volterra operator; integral observation; 

\medskip\noindent {\bf 2010 Mathematics Subject Classification.}
35R30, 65J20, 35K20, 45Q05,  65M32, 65R32
\begin{abstract}
We study an inverse source problem for a linear non-autonomous parabolic equation with additive source term $f(t)+\zeta(t,x)$, where the unknown component depends only on time and is recovered from an integral observation of the solution. After reducing the problem to an equivalent linear inverse problem, we establish existence and uniqueness of the Tikhonov-regularized solution and derive a first-order optimality condition. We show that the forward operator admits a Volterra representation in time, yielding a weak-norm stability estimate in $H^{-1}(0,T)$ and, under an a priori $H^r(0,T)$ bound on the source, a conditional H\"{o}lder stability estimate in $L^2(0,T)$.

Motivated by this Volterra structure, we introduce an adjoint-free integral feedback method that reconstructs the source using only forward solves. We analyze the feedback iteration by establishing its well-definedness and fixed-point properties, convergence for exact data, and finite-iteration stability with respect to noisy data. We further show that, with an appropriate noise-dependent stopping rule, the method constitutes an iterative regularization scheme. Numerical experiments for smooth and piecewise constant sources, supplemented by temporal regularization and automatic parameter selection, demonstrate accurate and stable reconstructions in the presence of noise.
\end{abstract}

\section{Introduction}

Let $\Omega \subset \mathbb{R}^d$, $d\ge 1$, be a bounded domain with smooth boundary $\partial \Omega$ and measure $|\Omega|>0$, and let $T > 0$. Consider the parabolic partial differential equation (PDE):
\begin{alignat}{2}
\partial_t u - \nabla \cdot (a(t,x) \nabla u) + b(t,x) \cdot \nabla u + c(t,x) u &= f(t) + \zeta(t,x), &&\quad (t,x) \in (0,T)\times \Omega \label{parab1} \\
u(0,x) &= u_{0}(x), &&\quad x \in\Omega \label{ic} \\
a(t,x) \nabla u \cdot \nu &= g(t,x), &&\quad (t,x) \in (0,T)\times\partial\Omega \label{bc},
\end{alignat}
where $\nu$ is the outward unit normal to $\partial\Omega$. We refer to the equation as non-autonomous, since the coefficients of the differential operator depend explicitly on time.

Given the coefficients $a(t,x), b(t,x), c(t,x)$, the source function $f(t)+\zeta(t,x)$, and the initial and boundary conditions, problem~\eqref{parab1}--\eqref{bc} is referred to as a direct or forward problem. It is widely used in various applications in science and technology, mathematics, physics, engineering, and many other fields. It describes the evolution in time of the density of some quantity $u(t,x)$, such as the concentration of a chemical or temperature within the region $\Omega$. The second term on the left-hand side of Equation~\eqref{parab1} describes diffusion, the third describes transport, and the fourth describes creation or depletion~\cite[pp.~372--373]{Evans10}.

On the other hand, given the above coefficients, the source function $\zeta(t,x)$, and the initial and boundary conditions, the problem of determining the pair $(u,f)$ in~\eqref{parab1}--\eqref{bc} is called an inverse source problem. As such, the problem is underdetermined, and an additional condition on the solution $u$ of the direct problem is required in order to recover the source $f$. Several types of additional conditions have been considered in the literature depending on whether the unknown function is space-dependent or time-dependent. These include, but are not limited to, knowledge of the solution at the final time, an integral observation of the solution, observation of the solution at an interior spatial point, and knowledge of the normal derivative at a boundary point.

This work is concerned with the recovery of the time-dependent source function $f(t)$ from the integral constraint
\begin{equation}
\int_{\Omega}u(t,x)\,dx = \mu(t), \quad t \in [0,T]. \label{integ1}
\end{equation}
The problem is therefore as follows: given the component $\zeta(t,x)$ of the source, the initial and boundary conditions, the integral observation $\mu(t)$, and the coefficients $a,b,c$, determine the pair $(u,f)$ satisfying Equations~\eqref{parab1}--\eqref{integ1}.

If $u(t,x)$ represents temperature, then $\mu(t)$ can be interpreted as a measurement of the total thermal energy. While the direct problem consists of determining the temperature distribution $u$ given the source $f$, the inverse problem seeks to recover the source function $f$ that produced the observation $\mu$.

Inverse source problems for parabolic equations arise in a wide range of applications, including heat conduction, diffusion processes, environmental modeling, and epidemiology, where one seeks to reconstruct an unknown forcing term from indirect or incomplete measurements of the system state. In many practical situations, measurements are not available pointwise in space but rather in an averaged or aggregated form, leading naturally to inverse problems with integral observations. Such nonlocal measurements appear, for instance, in thermal imaging, where sensors may record averaged temperature, in population dynamics, where only total population counts are accessible, and in chemical processes, where only bulk quantities can be observed. See, for example,~\cite{cannon1986diffusion, GlotovHamesMeirNgoma2}.

These problems are inherently ill-posed in the sense of Hadamard, as small perturbations in the data may lead to large deviations in the reconstructed source. Consequently, both analytical understanding and appropriate regularization strategies are essential for obtaining stable and physically meaningful solutions.

A substantial body of work has been devoted to inverse source problems for parabolic equations under classical regularity assumptions, particularly in H\"older spaces. In this setting, well-posedness, regularity, and numerical reconstruction methods have been studied extensively; see, for example,~\cite{Cannon84,GlotovHamesMeirNgoma1,HazaneeEtAl13,Isakov2006,Yamamoto09} and the references therein. The reader interested in an inverse scattering problem may consult~\cite{Kedzierawski1993}.

In particular, Ngoma~\cite{ngoma2024well} established the well-posedness of an inverse source problem with an integral observation in parabolic H\"older spaces. The analysis was carried out under the assumption that the zeroth-order coefficient vanishes, $c(t,x)=0$, and that the unknown source depends only on time. It was shown that the inverse problem can be reduced to a Volterra integral equation of the first kind, thereby revealing its ill-posed nature; see also~\cite{DamirchiEtAl19}. Numerical reconstruction was performed using a collocation method combined with Tikhonov regularization and the Morozov discrepancy principle.

Despite these advances, the existing theory is largely confined to classical solutions in H\"older spaces. In many applications, however, the available data may be noisy or of limited regularity, and the coefficients of the governing equation may not possess the smoothness required for classical well-posedness. This motivates the development of a framework based on weak formulations, where solutions are sought in Hilbert spaces and the inverse problem is analyzed through variational methods.

To the best of our knowledge, while inverse source problems for parabolic equations have been studied in weak formulations and with integral-type observations (see, e.g.,~\cite{Hasanov2011,hasanov2014unified,PrilepkoKamyninKostin2017,VanBockstal2025}), these works typically focus either on well-posedness, adjoint-based optimization methods, or specific numerical schemes, and are often restricted to autonomous or time-independent operators.

In contrast, a systematic framework that combines a weak variational formulation for non-autonomous parabolic equations, a detailed analysis of the forward operator and its stability properties, and a mathematically analyzed adjoint-free reconstruction method has not been fully developed.

The goal of this work is to address this gap for a non-autonomous parabolic equation with an additive source term of the form $f(t)+\zeta(t,x)$, where the unknown component depends only on time. The reader interested in the meaning and occurrence of additive and multiplicative sources may consult~\cite{ngoma2024well} and the references therein.

Our contributions are fourfold. First, we formulate the inverse problem as a minimization problem for a Tikhonov functional in $L^2(0,T)$ and prove the existence and uniqueness of a minimizer using tools from the calculus of variations; see, for example,~\cite{EkelandTemam99,EnglHankeNeubauer1996,Lions69}. We further derive a first-order optimality condition through an adjoint equation and obtain a variational characterization of the regularized solution.

Second, we investigate the structure and stability properties of the forward operator. We show that it admits a Volterra representation in time, which yields a weak-norm stability estimate in $H^{-1}(0,T)$ and reveals the smoothing mechanism responsible for the ill-posedness of the inverse problem. By combining this estimate with an a priori $H^r(0,T)$ bound on the source, we derive a conditional H\"older stability estimate in $L^2(0,T)$.

Third, motivated by this Volterra structure, we introduce an adjoint-free integral feedback method for reconstructing the unknown source. For adjusted data $h^\delta$ obtained by removing the known contributions from the observation, the basic feedback iteration takes the form
\begin{equation}\label{eq:intro_feedback}
f^{(k+1)}
=
f^{(k)}
-
\frac{1}{\alpha}
\Phi\bigl(F(f^{(k)})-h^\delta\bigr),
\end{equation}
where $\alpha>0$ is the feedback parameter and $\Phi$ is the Nemytskii (or superposition) operator induced by a nonlinear feedback function $\phi$. In contrast to classical PDE-constrained optimization approaches, the iteration~\eqref{eq:intro_feedback} does not require the computation of the adjoint operator $F^*$ or the solution of an additional adjoint PDE. Instead, the source is updated directly from the data misfit using forward solves only.

Fourth, we provide a mathematical analysis of the integral feedback iteration. Under suitable assumptions on the feedback function, we establish the well-definedness and Lipschitz continuity of the feedback operator and characterize exact solutions as its fixed points. We then derive stability estimates for the residuals and establish convergence of the exact-data iteration under appropriate assumptions. For noisy data, we prove finite-iteration stability and show that a suitable noise-dependent early stopping rule turns the feedback iteration into an iterative regularization method. These results provide a theoretical connection between the proposed reconstruction algorithm and the stability analysis of the inverse problem.

For the numerical implementation, the feedback iteration is supplemented with second-order temporal regularization to further suppress oscillations caused by noisy data. The feedback and regularization parameters are selected automatically using the Morozov discrepancy principle together with a curvature criterion. Numerical experiments with smooth and piecewise constant sources are used to assess the accuracy and robustness of the resulting reconstruction procedure.

The remainder of the paper is organized as follows. In Section~\ref{sec:assumptions_preliminary}, we state the assumptions used throughout the analysis, and in Section~\ref{sec:wellposedness}, we recall the well-posedness of the direct problem under these assumptions. Section~\ref{sec:existence_uniqueness} is devoted to the variational formulation and the existence and uniqueness of the Tikhonov-regularized solution. In Section~\ref{sec:optimality}, we derive the first-order optimality condition and provide a variational characterization of the minimizer, together with additional regularity properties. Section~\ref{sec:cond_stab} analyzes the forward operator and the degree of ill-posedness of the inverse problem. In particular, we establish its Volterra structure, derive a weak-norm stability estimate in $H^{-1}(0,T)$, and obtain a conditional H\"older stability result in $L^2(0,T)$ under suitable a priori bounds. Section~\ref{sec:feedback_analysis} introduces the integral feedback iteration and develops its mathematical analysis, including well-definedness, fixed-point properties, exact-data convergence, finite-iteration stability with respect to noisy data, and iterative regularization by early stopping. Finally, in Section~\ref{sec:numerical_results}, we describe the temporally regularized numerical implementation, the automatic parameter selection strategy, and numerical experiments for both smooth and piecewise constant sources.

\section{Assumptions}\label{sec:assumptions_preliminary}
Throughout this work, we make the following assumptions.
\begin{description}
\item[(A1) Operator coefficients.]
The matrix-valued function $a(t,x)\in\mathbb{R}^{d\times d}$ is symmetric and uniformly elliptic:
there exists $\sigma>0$ such that
\[
a(t,x)\xi\cdot\xi \ge \sigma|\xi|^2
\quad \text{for all } \xi\in\mathbb{R}^d
\text{ and a.e. } (t,x)\in(0,T)\times\Omega.
\]
Moreover, $a \in L^\infty\bigl(0,T;L^\infty(\Omega;\mathbb{R}^{d\times d})\bigr)\cap W^{1,\infty}\bigl(0,T;L^\infty(\Omega;\mathbb{R}^{d\times d})\bigr)$.

The lower-order coefficients satisfy $b,c \in L^\infty\bigl((0,T)\times\Omega\bigr)$.
\item[(A2) Known data.]
The initial condition, distributed source, and boundary data satisfy
\[
u_0\in L^2(\Omega),\qquad
\zeta\in L^2(0,T;L^2(\Omega)),\qquad
g\in L^2(0,T;L^2(\partial\Omega)).
\]
\item[(A3) Unknown source.]
The time-dependent source satisfies $f\in L^2(0,T)$.
\end{description}

\section{Well-posedness of the Direct Problem}\label{sec:wellposedness}
Under assumptions (A1)–(A3), the direct problem is well-posed in the following sense.

\begin{theorem}[Well-posedness of the direct problem]\label{thm:well_posedness}
Under assumptions (A1)–(A3), the direct problem admits a unique weak solution
\[
u_f\in L^2(0,T;H^1(\Omega)), \quad\text{with}\quad 
\partial_t u_f\in L^2(0,T;(H^1(\Omega))^*),
\]
where $(H^1(\Omega))^*$ denotes the dual space of $H^1(\Omega)$.

Moreover, $u_f\in C([0,T];L^2(\Omega))$, and the mapping
$t\mapsto \|u_f(t)\|_{L^2(\Omega)}^2$ is absolutely continuous with
\[
\frac12\frac{d}{dt}\|u_f(t)\|_{L^2(\Omega)}^2
= \langle \partial_t u_f(t),u_f(t)\rangle_{(H^1(\Omega))^*,\,H^1(\Omega)}
\quad \text{for a.e. } t\in(0,T).
\]
\end{theorem}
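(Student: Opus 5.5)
We will prove the result with the standard Galerkin (Faedo--Lions) method for non-autonomous parabolic problems in the Gelfand triple $V=H^1(\Omega)\hookrightarrow H=L^2(\Omega)\hookrightarrow V^*$.

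\textbf{Weak formulation and coercivity.} For a.e. $t$ we set
\[
B(t;u,v)=\int_\Omega a\nabla u\cdot\nabla v+(b\cdot\nabla u)v+cuv\,dx ,
\]
and define the functional $\ell(t)\in V^*$ by
\[
\langle \ell(t),v\rangle=\int_\Omega (f(t)+\zeta(t))v\,dx+\int_{\partial\Omega}g(t)v\,ds .
\]
The weak problem is $\langle \partial_t u,v\rangle+B(t;u,v)=\langle\ell(t),v\rangle$ for all $v\in V$ and a.e. $t$, together with $u(0)=u_0$. By (A1), $B$ is measurable in $t$ and bounded, $|B(t;u,v)|\le M\|u\|_V\|v\|_V$. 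Using ellipticity and Young's inequality on the $b$ and $c$ terms, it satisfies the G\aa rding inequality
\[
B(t;u,u)\ge \tfrac{\sigma}{2}\|u\|_V^2-\lambda\|u\|_H^2
\]
uniformly in $t$. By the trace theorem $H^1(\Omega)\to L^2(\partial\Omega)$ and (A2)--(A3), we have $\ell\in L^2(0,T;V^*)$, with norm bounded by $C(\|f\|_{L^2(0,T)}|\Omega|^{1/2}+\|\zeta\|+\|g\|)$. The substitution $w=e^{-\lambda t}u$ reduces the problem to a coercive one, so we may assume $\lambda=0$ in the estimates.

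\textbf{Galerkin scheme and a priori estimates.} We take an orthonormal basis of $H$ consisting of Neumann eigenfunctions, which is orthogonal in $V$, and solve the finite-dimensional linear ODE system for $u_m$. Its coefficients are $L^\infty$ in time, so Carath\'eodory theory gives a unique absolutely continuous solution. Testing with $u_m$ and using G\aa rding and Gronwall yields uniform bounds for $u_m$ in $L^\infty(0,T;H)\cap L^2(0,T;V)$. Testing with projections, and using that the projection is $V$-stable for this basis, gives a uniform bound for $\partial_t u_m$ in $L^2(0,T;V^*)$. Weak compactness then provides limits, and passage to the limit in the equation is linear. The initial condition is recovered by testing against $\psi(t)v$ with $\psi(T)=0$ and integrating by parts in time. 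The $W^{1,\infty}$-in-time regularity of $a$ is not needed for this existence argument.

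\textbf{Continuity, energy identity, and uniqueness.} The remaining claims follow from the Lions--Magenes lemma: any $u\in W(0,T)=\{u\in L^2(0,T;V):\partial_t u\in L^2(0,T;V^*)\}$ lies in $C([0,T];H)$, and $t\mapsto\|u(t)\|_H^2$ is absolutely continuous with derivative $2\langle\partial_t u,u\rangle$. We prove this by density of $C^1([0,T];V)$ in $W(0,T)$ via mollification in time after extension by reflection, together with the identity for smooth functions and the estimate $\sup_t\|u\|_H\le C\|u\|_{W}$. For uniqueness, the difference $w$ of two solutions has zero data. Applying the energy identity and G\aa rding gives $\tfrac{d}{dt}\|w\|_H^2\le 2\lambda\|w\|_H^2$ with $w(0)=0$, so Gronwall forces $w\equiv0$.

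\textbf{Main obstacle.} The technical step is the uniform bound on $\partial_t u_m$ in $L^2(0,T;V^*)$, which requires a basis whose $H$-orthogonal projections are uniformly bounded on $V$. Choosing Neumann Laplacian eigenfunctions handles this, and it is the point I would check most carefully. Alternatively, one could invoke J.-L. Lions' theorem for time-dependent forms directly, since it gives existence for measurable bounded G\aa rding-type forms.
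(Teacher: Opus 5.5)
Your proposal is correct and takes essentially the same approach as the paper. The paper's proof only cites standard parabolic theory (Evans, Lions--Magenes), and your sketch spells that theory out: Galerkin with a Neumann eigenfunction basis that is orthogonal in both $L^2(\Omega)$ and $H^1(\Omega)$, a G\aa rding inequality, weak compactness, and the Lions--Magenes lemma for continuity, the energy identity and uniqueness.
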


\begin{proof}
This follows from standard parabolic theory; see, e.g.,~\cite{Evans10,LionsMagenes1972}.
\end{proof}

\section{Existence and Uniqueness of Solutions}\label{sec:existence_uniqueness}
We begin with the following result.
\begin{lemma}\label{lem:linearity}
Let $f\in L^2(0,T)$ and let $u_f$ denote the corresponding weak solution of the direct problem
\begin{alignat}{2}
\partial_t u_f - \nabla \cdot (a(t,x) \nabla u_f) + b(t,x) \cdot \nabla u_f + c(t,x) u_f &= f(t), &&\quad (t,x) \in (0,T)\times \Omega \label{parab2} \\
u_f(0,x) &= 0, &&\quad x \in\Omega \label{ic2} \\
a(t,x) \nabla u_f \cdot \nu &= 0, &&\quad (t,x) \in (0,T)\times\partial\Omega \label{bc2}.
\end{alignat}
Then the map $F: L^2(0,T)\to L^2(0,T)$ defined by
\[
F(f)(t) := \int_\Omega u_f(t,x)\,dx,
\]
is a linear operator.
\end{lemma}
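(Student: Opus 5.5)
The plan is to show first that $F$ is well defined as a map into $L^2(0,T)$, and then that it is linear by using uniqueness of weak solutions from Theorem~\ref{thm:well_posedness}.

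\emph{Well-definedness.} For $f\in L^2(0,T)$, the right-hand side $f(t)$ (a function constant in $x$) lies in $L^2(0,T;L^2(\Omega))$, since $\|f\|_{L^2(0,T;L^2(\Omega))}^2=|\Omega|\,\|f\|_{L^2(0,T)}^2$. The homogeneous data $u_0=0$, $\zeta=0$, $g=0$ satisfy (A2). Theorem~\ref{thm:well_posedness} therefore gives a unique weak solution $u_f\in C([0,T];L^2(\Omega))\cap L^2(0,T;H^1(\Omega))$. By Cauchy--Schwarz in $x$,
\[
\Bigl|\int_\Omega u_f(t,x)\,dx\Bigr|\le |\Omega|^{1/2}\|u_f(t)\|_{L^2(\Omega)},
\]
so $t\mapsto F(f)(t)$ is continuous on $[0,T]$, hence measurable, and
\[
\|F(f)\|_{L^2(0,T)}\le |\Omega|^{1/2}T^{1/2}\|u_f\|_{C([0,T];L^2(\Omega))}<\infty .
\]
Thus $F(f)\in L^2(0,T)$.

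\emph{Linearity.} Fix $f_1,f_2\in L^2(0,T)$ and scalars $\lambda_1,\lambda_2$, and set $w:=\lambda_1u_{f_1}+\lambda_2u_{f_2}$. I will verify that $w$ is a weak solution of \eqref{parab2}--\eqref{bc2} with source $\lambda_1f_1+\lambda_2f_2$. The weak formulation reads: for all $v\in H^1(\Omega)$ and a.e.\ $t$,
\[
\langle\partial_t u(t),v\rangle+\int_\Omega a\nabla u\cdot\nabla v+(b\cdot\nabla u)v+cuv\,dx=\int_\Omega f(t)v\,dx ,
\]
where the homogeneous Neumann condition \eqref{bc2} produces no boundary term. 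Every term is linear in the pair $(u,f)$. Moreover, $w$ lies in the required solution space, because that space is a vector space and the time derivative is linear, so $\partial_t w=\lambda_1\partial_tu_{f_1}+\lambda_2\partial_tu_{f_2}$. The initial condition $w(0)=0$ also holds. By the uniqueness part of Theorem~\ref{thm:well_posedness}, $w=u_{\lambda_1f_1+\lambda_2f_2}$. Integrating over $\Omega$, which is itself a linear operation, gives
\[
F(\lambda_1f_1+\lambda_2f_2)=\lambda_1F(f_1)+\lambda_2F(f_2).
\]

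The only delicate point is that linearity requires homogeneous auxiliary data. This is exactly why the lemma is stated with $u_0=0$, $\zeta=0$, $g=0$: with inhomogeneous data the solution map would be affine rather than linear. The remaining steps are routine.
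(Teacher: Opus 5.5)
Your proposal is correct and takes the same route as the paper: $\lambda_1u_{f_1}+\lambda_2u_{f_2}$ is a weak solution with source $\lambda_1f_1+\lambda_2f_2$, so the uniqueness part of Theorem~\ref{thm:well_posedness} identifies it with $u_{\lambda_1f_1+\lambda_2f_2}$, and integrating over $\Omega$ gives linearity. Your extra check that $F(f)\in L^2(0,T)$, via $u_f\in C([0,T];L^2(\Omega))$, is sound; the paper takes this for granted in the lemma and proves the quantitative bound separately in Lemma~\ref{lem:bounded}.
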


\begin{proof}
Let $f_1,f_2\in L^2(0,T)$ and $\psi,\omega\in\mathbb{R}$. Let $u_{f_1}$ and $u_{f_2}$ be the corresponding solutions to PDE~\eqref{parab2}--\eqref{bc2} with source terms $f_1$ and $f_2$, respectively. 
By linearity of the PDE and uniqueness of weak solutions, we have
\[
u_{\psi f_1 + \omega f_2} = \psi u_{f_1} + \omega u_{f_2}.
\]
Therefore,
\[
F(\psi f_1 + \omega f_2)(t)
= \int_\Omega (\psi u_{f_1} + \omega u_{f_2})(t,x)\,dx
= \psi F(f_1)(t) + \omega F(f_2)(t),
\]
which proves linearity.
\end{proof}

Next, we introduce the adjusted data and the adjusted forward map, which will be central to the proof of the existence and uniqueness of solutions. To this end, assume that assumptions $(A1)-(A3)$ hold and let $\widetilde{F}: L^2(0,T)\to L^2(0,T)$ denote the forward map of system~\eqref{parab1}--\eqref{bc}. Then the spatial integral of the solution at each time is given by
\[
\widetilde{F}(f)(t) := \int_\Omega u(t,x) \, dx = \mu(t),\quad\text{ for all $t\in [0,T]$},
\]
where $u$ solves the PDE~\eqref{parab1}--\eqref{bc}. Since the PDE is linear, we decompose the solution $u$ as
\[
u(t,x) = u_f(t,x) + u_\zeta(t,x) + u_g(t,x) + u_{u_0}(t,x),
\]
where $u_f$ solves the PDE~\eqref{parab2}--\eqref{bc2}, $u_\zeta$ solves the PDE~\eqref{parab1}--\eqref{bc} with source term $\zeta(t,x)$, zero initial, and homogeneous boundary data, $u_g$ solves the same PDE~\eqref{parab1}--\eqref{bc} with zero source, zero initial, but nonzero Neumann boundary data $g(t,x)$, and $u_{u_0}$ solves the same PDE~\eqref{parab1}--\eqref{bc} with zero source, zero Neumann boundary data, but nonzero initial condition $u_0(x)$.

Define the adjusted data
\begin{equation}\label{adjusted_data}
h(t) := \mu(t) - \int_\Omega u_\zeta(t,x)\,dx - \int_\Omega u_g(t,x)\,dx - \int_\Omega u_{u_0}(t,x)\,dx,
\end{equation}
where $\mu(t)$ is given by~\eqref{integ1}. This allows us to reformulate the inverse problem in terms of the linearly adjusted forward map as in Lemma~\ref{lem:linearity}
\begin{equation}
\label{forward_map2}
F(f)(t) := \int_\Omega u_f(x,t)\,dx,
\end{equation}
and the adjusted data $h(t)$ in~\eqref{adjusted_data}, leading to the linear inverse problem:
\begin{equation}\label{new_inverse}
\text{Find $f\in L^2(0,T)$ such that $F(f)(t) = h(t)$}.
\end{equation}

From this point on, we focus exclusively on the analysis of the adjusted forward map $F$ (which we will refer to as the forward map for simplicity) in~\eqref{forward_map2} and the corresponding adjusted data $h$ in~\eqref{adjusted_data}.
The following result shows that the forward map~\eqref{forward_map2} is a bounded operator.
\begin{lemma}\label{lem:bounded}
Assume Assumptions (A1)--(A3) hold. Then, the forward map $F: L^2(0,T)\to L^2(0,T)$ defined in~\eqref{forward_map2} is a bounded operator.
\end{lemma}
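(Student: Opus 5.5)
Since $F$ is linear by Lemma~\ref{lem:linearity}, it suffices to show $\|F(f)\|_{L^2(0,T)}\le C\|f\|_{L^2(0,T)}$ with $C$ depending only on $T$, $|\Omega|$, $\sigma$ and the $L^\infty$ norms of $a,b,c$. The plan is to bound $u_f$ in $C([0,T];L^2(\Omega))$ by a standard energy estimate, and then control the spatial integral by Cauchy--Schwarz:
\[
|F(f)(t)|=\Bigl|\int_\Omega u_f(t,x)\,dx\Bigr|\le |\Omega|^{1/2}\|u_f(t)\|_{L^2(\Omega)}.
\]

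For the energy estimate, test the weak form of \eqref{parab2}--\eqref{bc2} with $u_f(t)\in H^1(\Omega)$ and use the identity for $\frac12\frac{d}{dt}\|u_f(t)\|^2_{L^2(\Omega)}$ from Theorem~\ref{thm:well_posedness}. The homogeneous Neumann condition removes the boundary term. Uniform ellipticity gives the coercive contribution $\sigma\|\nabla u_f\|^2$. The convection term is bounded by
\[
\|b\|_\infty\|\nabla u_f\|\,\|u_f\|\le \tfrac{\sigma}{2}\|\nabla u_f\|^2+\tfrac{\|b\|_\infty^2}{2\sigma}\|u_f\|^2,
\]
and the reaction term by $\|c\|_\infty\|u_f\|^2$. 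The source term satisfies
\[
\Bigl|\int_\Omega f(t)u_f\,dx\Bigr|\le |f(t)|\,|\Omega|^{1/2}\|u_f\|\le \tfrac12|\Omega||f(t)|^2+\tfrac12\|u_f\|^2.
\]
Combining these gives $\frac{d}{dt}\|u_f\|^2\le K\|u_f\|^2+|\Omega||f(t)|^2$, where $K$ is a constant built from $\sigma$, $\|b\|_\infty$ and $\|c\|_\infty$.

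Since $u_f(0)=0$, Gronwall's inequality then yields
\[
\sup_{t\in[0,T]}\|u_f(t)\|^2_{L^2(\Omega)}\le e^{KT}|\Omega|\,\|f\|^2_{L^2(0,T)}.
\]
Squaring the Cauchy--Schwarz bound and integrating over $(0,T)$ gives $\|F(f)\|^2_{L^2(0,T)}\le T|\Omega|^2e^{KT}\|f\|^2_{L^2(0,T)}$. This is the desired bound.

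Measurability of $t\mapsto F(f)(t)$ follows from $u_f\in C([0,T];L^2(\Omega))$, since $v\mapsto\int_\Omega v\,dx$ is a continuous functional on $L^2(\Omega)$. In fact this shows $F(f)$ is continuous in $t$.

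The main point needing care is the rigorous justification of the energy identity in the weak setting; this is supplied directly by Theorem~\ref{thm:well_posedness}. The absorption of the first-order term $b\cdot\nabla u_f$ is the other step that must be done carefully, and it is handled by the Young splitting above. Everything else is routine.
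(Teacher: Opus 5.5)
Your proposal is correct and follows essentially the same route as the paper: you test with $u_f(t)$ using the energy identity of Theorem~\ref{thm:well_posedness}, apply Young's inequality to the convection and source terms, use Gronwall with $u_f(0)=0$, and finish with Cauchy--Schwarz in space, which gives the same constant $T|\Omega|^2e^{KT}$. Your fixed choice $\varepsilon=\sigma$ in the Young splitting is simply a concrete instance of the paper's free $\varepsilon$, and your remark on measurability and continuity of $F(f)$ is a harmless addition.
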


\begin{proof}
We first derive the weak formulation. Due to the Neumann boundary conditions, the weak solution $u_f$ of the system~\eqref{parab2}--\eqref{bc2} is sought in the space $L^2(0,T; H^1(\Omega))$ with $\partial_tu_f\in L^2(0,T; (H^1(\Omega))^*)$. To that end, we multiply~\eqref{parab2} by a test function $v\in H^1(\Omega)$, integrate and take $v = u_f(t)$ for almost every $t\in (0,T)$ to get
\begin{equation}\label{weak_form}
\int_\Omega\partial_tu_f\cdot u_f\,dx = \int_\Omega\nabla\cdot (a\nabla u_f)\cdot u_f\,dx - \int_\Omega (b\cdot\nabla u_f)\cdot u_f\,dx - \int_\Omega cu_f^2\,dx + f(t)\int_\Omega u_f\,dx.
\end{equation}
Next, we find an estimate for each term in~\eqref{weak_form}. 

The estimate for the time derivative term follows from Theorem~\ref{thm:well_posedness}: for almost all $t\in (0,T)$,
\begin{equation}\label{time_derivative}
\frac{1}{2}\frac{d}{dt}\|u_f(t)\|^2_{L^2(\Omega)}= \langle \partial_tu_f(t), u_f(t)\rangle_{(H^1)^*,H^1} = \int_\Omega\partial_tu_f\cdot u_f\,dx.
\end{equation}
For the diffusion term, we have thanks to the Divergence Theorem that
\begin{align}\label{diffusion_term}
\int_\Omega\nabla\cdot (a\nabla u_f)\cdot u_f\,dx &= - \int_\Omega a |\nabla u_f|^2\,dx \le - \sigma\|\nabla u_f(t)\|_{L^2(\Omega)}^2.
\end{align}
where we have used the uniform ellipticity in assumption (A1). The boundary term vanishes due to~\eqref{bc2}.

For the convection term, we have the following.
\begin{align}\label{convection_term}
- \int_\Omega (b\cdot\nabla u_f)\cdot u_f\,dx 
&\le \|b\|_{L^{\infty}([0,T]\times\Omega)}\|\nabla u_f(t)\|_{L^2(\Omega)}\|u_f(t)\|_{L^2(\Omega)}\notag\\
&\le \frac{\varepsilon}{2}\|\nabla u_f(t)\|^2_{L^2(\Omega)} + \frac{1}{2\varepsilon}\|b\|^2_{L^{\infty}([0,T]\times\Omega)}\|u_f(t)\|^2_{L^2(\Omega)},
\end{align}
where, in the last inequality we have employed Young's Inequality with $\varepsilon$ to absorb the result into an energy estimate. Here, $\varepsilon>0$ is arbitrary.

For the reaction term, we have
\begin{align}\label{reaction_term}
- \int_\Omega cu_f^2\,dx &\le  \|c\|_{L^{\infty}([0,T]\times\Omega)}\|u_f(t)\|^2_{L^2(\Omega)}.
\end{align}
For the source term, we have
\begin{align}\label{source_term}
f(t)\int_\Omega u_f\,dx &\le\frac{1}{2}\|u_f(t)\|^2_{L^2(\Omega)} + \frac{|\Omega|}{2}|f(t)|^2,
\end{align}
where we have used Young's Inequality in the last quantity. 

Next, substituting~\eqref{time_derivative}--\eqref{source_term} into~\eqref{weak_form} and arranging, we obtain
\begin{align*}
\frac{1}{2}\frac{d}{dt}\|u_f(t)\|^2_{L^2(\Omega)} + \left(\sigma -\frac{\varepsilon}{2}\right)\|\nabla u_f(t)\|_{L^2(\Omega)}^2 &\le\frac{1}{2}C_1\|u_f(t)\|^2_{L^2(\Omega)} + \frac{1}{2}C_2|f(t)|^2
\end{align*}
where
\begin{equation}\label{c12_constants}
C_1 = \left(\frac{1}{\varepsilon}\|b\|^2_{L^{\infty}([0,T]\times\Omega)} + 2\|c\|_{L^{\infty}([0,T]\times\Omega)} + 1 \right),\;\text{and $C_2 = |\Omega|$}.
\end{equation}
Now, pick $\varepsilon>0$ small enough so that $(\sigma-\varepsilon/2)>0$. By ignoring the positive term on the left-hand side, we conclude that
\begin{align}\label{gronwall}
\frac{d}{dt}\|u_f(t)\|^2_{L^2(\Omega)} \le C_1\|u_f(t)\|^2_{L^2(\Omega)} + C_2|f(t)|^2.
\end{align}
Applying Gronwall's Inequality yields
\begin{equation}\label{uf_bound}
\|u_f(t)\|^2_{L^2(\Omega)} \le C_3\|f\|^2_{L^2(0,T)},
\end{equation}
where $C_3 = e^{C_1T}C_2$ and $C_1$ and $C_2$ are given by~\eqref{c12_constants}.

We are now ready to establish a bound for the forward map $F$. We have for a given source $f$:
\[
\|F(f)\|_{L^2(0,T)}^2 \le |\Omega|\,\int_0^T \|u_f(t)\|^2_{L^2(\Omega)}\,dt\le T|\Omega| C_3\,\|f\|^2_{L^2(0,T)}
\]
where we have used~\eqref{uf_bound}. We conclude that
\begin{equation}\label{F_bound}
\|F(f)\|_{L^2(0,T)} \le C \|f\|_{L^2(0,T)},
\end{equation}
where $C = C\left(\|b\|_{L^{\infty}([0,T]\times\Omega)}, \|c\|_{L^{\infty}([0,T]\times\Omega)}, |\Omega|, T\right)$. This shows that the forward map $F$ is bounded, as needed.
\end{proof}

\begin{remark}
The estimate~\eqref{F_bound} implies, by definition of the operator norm, that
\[
\|F\|_{\mathcal{L}(L^2(0,T))}
:=
\sup_{\substack{f\in L^2(0,T)\\ f\ne0}}
\frac{\|F(f)\|_{L^2(0,T)}}{\|f\|_{L^2(0,T)}}
\le C.
\]
In the analysis of the integral feedback method in Section~\ref{sec:feedback_analysis}, we use the operator-norm notation
$\|F\|_{\mathcal{L}(L^2(0,T))}$ for the bounded forward operator $F$.
\end{remark}

Our next step is to show the existence and uniqueness of solutions. To this end, let
\begin{equation}\label{inverse_data_modified}
h^\delta(t) := \mu^\delta(t) - \int_\Omega u_\zeta(t,x)\,dx - \int_\Omega u_g(t,x)\,dx - \int_\Omega u_{u_0}(t,x)\,dx 
\end{equation}
be the adjusted noisy data after removing known contributions from the source $\zeta$, the initial $u_0$, and the boundary condition $g$. The function $\mu^{\delta}(t)$ is the noisy data from the given data $\mu(t)$. 

We define the Tikhonov functional:
\begin{equation}\label{Tikhonov_functional}
J_\lambda(f) = \|F(f) - h^\delta\|_{L^2(0,T)}^2 + \lambda \|f\|_{L^2(0,T)}^2,
\end{equation}
where $\lambda > 0$ is the regularization parameter. We aim to prove the following result:
\begin{theorem}\label{existence_uniqueness2}
For every $\lambda>0$ and $h^{\delta}\in L^2(0,T)$, the functional $J_\lambda(f)$ admits a unique minimizer in $L^2(0,T)$. That is, there exists a unique $f^\lambda\in L^2(0,T)$ such that
\[
J_\lambda(f^\lambda) = \inf_{f\in L^2(0,T)}J_\lambda(f).
\]
\end{theorem}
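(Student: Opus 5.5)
The plan is to use the direct method of the calculus of variations, relying on the fact (Lemmas~\ref{lem:linearity} and~\ref{lem:bounded}) that $F$ is a bounded linear operator on $L^2(0,T)$. I will prove three things about $J_\lambda$ on the Hilbert space $L^2(0,T)$: it is continuous, it is coercive, and it is strictly convex. Existence then follows from weak compactness and weak lower semicontinuity, and uniqueness from strict convexity.

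\textbf{Coercivity and continuity.} Since both terms are nonnegative,
\[
J_\lambda(f)\ge \lambda\|f\|_{L^2(0,T)}^2,
\]
so $J_\lambda(f)\to\infty$ as $\|f\|_{L^2(0,T)}\to\infty$. Continuity in the strong topology follows from the bound~\eqref{F_bound}: the map $f\mapsto F(f)-h^\delta$ is continuous and affine, and norms are continuous.

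\textbf{Existence.} Let $m:=\inf J_\lambda\ge 0$ and take a minimizing sequence $(f_n)$. By coercivity, $(f_n)$ is bounded in $L^2(0,T)$. Since $L^2(0,T)$ is reflexive, a subsequence converges weakly, $f_n\rightharpoonup f^\lambda$. The operator $F$ is bounded and linear, hence weakly continuous, so $F(f_n)-h^\delta\rightharpoonup F(f^\lambda)-h^\delta$. The norm in a Hilbert space is weakly lower semicontinuous, which gives
\[
J_\lambda(f^\lambda)\le\liminf_{n\to\infty} J_\lambda(f_n)=m .
\]
Therefore $f^\lambda$ is a minimizer. Equivalently, one could note that $J_\lambda$ is convex and continuous, hence weakly lower semicontinuous by Mazur's lemma.

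\textbf{Uniqueness.} The term $f\mapsto\|F(f)-h^\delta\|^2$ is convex, being the composition of a convex function with an affine map. The term $\lambda\|f\|^2$ is strictly convex because of the parallelogram identity:
\[
\Bigl\|\tfrac{f_1+f_2}{2}\Bigr\|^2=\tfrac12\|f_1\|^2+\tfrac12\|f_2\|^2-\tfrac14\|f_1-f_2\|^2 .
\]
Hence $J_\lambda$ is strictly convex for $\lambda>0$. If $f_1\ne f_2$ were two minimizers, then $J_\lambda\bigl(\tfrac{f_1+f_2}{2}\bigr)<m$, which is a contradiction.

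As an alternative route, one could write down the normal equation $(F^*F+\lambda I)f=F^*h^\delta$ and apply Lax--Milgram, since $F^*F+\lambda I$ is bounded and coercive with constant $\lambda$. I do not expect a genuine obstacle; the only point needing care is the weak lower semicontinuity step, which rests on $F$ being bounded linear as established in Lemma~\ref{lem:bounded}.
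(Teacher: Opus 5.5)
Your proposal is correct and follows essentially the same route as the paper. Both use the direct method: coercivity, then weak lower semicontinuity via weak continuity of the bounded linear $F$, then uniqueness from strict convexity of $\lambda\|f\|_{L^2(0,T)}^2$ plus convexity of the misfit. Your direct bound $J_\lambda(f)\ge\lambda\|f\|_{L^2(0,T)}^2$ and your midpoint (parallelogram) strict-convexity argument are slightly more economical than the paper's expanded quadratic lower bound and general-$\rho$ identity, but the substance is the same.
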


Our proof relies on the direct method of the calculus of variations. In a reflexive Banach space $X$, any coercive, weakly lower semicontinuous, and strictly convex functional admits a unique minimizer over $X$ (see, e.g., \cite{EkelandTemam99,Evans10}). Coercivity and weak lower semicontinuity ensure the existence of a minimizer, while strict convexity guarantees uniqueness.

\begin{remark}
From this point on, we treat $h^\delta \in L^2(0,T)$ as a fixed function and suppress its explicit decomposition in terms of $\mu^\delta, \zeta, u_0, g$ for clarity. The Tikhonov functional is analyzed purely in terms of $f$ and $h^\delta$, as the known data components do not affect the mathematical properties such as coercivity, convexity, or weak lower semicontinuity.
\end{remark}

\begin{proof}
Observe that $J_\lambda$ is proper, since $F$ is a bounded linear operator from $L^2(0,T)$ to $L^2(0,T)$ by Lemma~\ref{lem:linearity} and Lemma~\ref{lem:bounded}. In fact, since $h^\delta\in L^2(0,T)$ and $F(f)\in L^2(0,T)$ for all $f\in L^2(0,T)$, both terms in $J_\lambda(f)$ from~\eqref{Tikhonov_functional} are finite for all $f\in L^2(0,T)$. This implies that $J_\lambda(f)<\infty$ for all $f\in L^2(0,T)$, as needed.

To show that $J_\lambda$ is coercive, we have, for all $f\in L^2(0,T)$,
\begin{align}\label{coercivity}
J_\lambda(f) &= \|F(f)\|^2_{L^2(0,T)} - 2\langle F(f), h^\delta\rangle + \|h^\delta\|_{L^2(0,T)}^2 + \lambda \|f\|_{L^2(0,T)}^2 \notag\\
& \ge -2C\|f\|_{L^2(0,T)}\|h^\delta\|_{L^2(0,T)} + \|h^\delta\|_{L^2(0,T)}^2 + \lambda \|f\|_{L^2(0,T)}^2
\end{align}
where in~\eqref{coercivity} we have used the boundedness of $F(f)$ as in~\eqref{F_bound}. Now, observe that the right-hand side of~\eqref{coercivity} is a quadratic function of $\|f\|_{L^2(0,T)}$ and thus $J_\lambda(f)\to\infty$ as $\|f\|_{L^2(0,T)}\to \infty$ since $\lambda>0$, as required.

Next, we will show that the Tikhonov functional $J_\lambda$ is weakly lower semicontinuous (WLSC). To this end, we will show that each of the norms in the definition of $J_\lambda$ is weakly lower semicontinuous in any real or complex Hilbert space $H$. The result will follow in $L^2(0,T)$. 

First, let us show that the map $f\mapsto \|f\|^2_H$ is WLSC. To this end, let $(f_n)$ be a sequence in $H$ and $f\in H$ with $f_n\rightharpoonup f$ in $H$. Recalling the Generalized Polarization Identity
\begin{equation}\label{polarization}
\|f_n\|^2_H = \|f\|^2_H + \|f_n - f\|_H^2 + 2\Re\langle f_n - f, f\rangle,
\end{equation}
we have,
\begin{equation*}
\liminf_{n\to\infty}\|f_n\|^2_H = \liminf_{n\to\infty}\left(\|f\|^2_H + \|f_n - f\|_H^2 + 2\Re\langle f_n - f, f\rangle\right).
\end{equation*}
Since $f_n\rightharpoonup f$ in $H$, then $\langle f_n - f, f\rangle\to\langle 0, f\rangle = 0$ and because $\|f_n - f\|^2_H\ge 0$, it follows that
\begin{equation}\label{norm1_WLSC}
\liminf_{n\to\infty}\|f_n\|^2_H \ge \|f\|^2_H,
\end{equation}
as desired.

Second, we show that if $Y$ is another Hilbert space and $F: H\to Y$ is continuous, then for all $h^\delta\in H$, the map $f\mapsto\|F(f) - h^\delta\|^2_Y$ is WLSC. To this end, taking $H = Y = L^2(0,T)$ we conclude that $F$ is continuous from Lemma~\ref{lem:linearity} and Lemma~\ref{lem:bounded} and hence weakly continuous. Now, since $f_n\rightharpoonup f$ in $H$ then $F(f_n)\rightharpoonup F(f)$ in $Y$. By our previous result,
\begin{equation}\label{norm2_WLSC}
\liminf_{n\to\infty}\|F(f_n) - h^\delta\|^2_Y \ge \|F(f) - h^\delta\|^2_Y,
\end{equation}
as needed. In conclusion, we have, using~\eqref{norm1_WLSC} and ~\eqref{norm2_WLSC}
\begin{align*}
\liminf_{n\to\infty}J_\lambda(f_n)
& \ge \|F(f) - h^\delta\|^2_{L^2(0,T)} + \lambda \|f\|^2_{L^2(0,T)} = J_\lambda(f).
\end{align*}
That is, $J_\lambda(f)$ is WLSC, as needed. 

In summary, we have shown so far that for all $f\in L^2(0,T)$, the Tikhonov functional $J_\lambda$ for $\lambda>0$ is proper, coercive and weakly lower semicontinuous. Thus, there exists $f^\lambda\in L^2(0,T)$ such that
\begin{equation}\label{eq:minimizer}
J_\lambda(f^\lambda) = \min_{f\in L^2(0,T)}J_\lambda(f).
\end{equation}
Finally, we show that the Tikhonov functional $J_\lambda$ is strictly convex. To this end, we will show that the maps $f\mapsto\|f\|^2_{L^2(0,T)}$ and $f\mapsto\|F(f) - h^\delta\|^2_{L^2(0,T)}$ for $h^\delta\in L^2(0,T)$, are strictly convex and convex, respectively.

To show that the map $f\mapsto\|f\|^2_{L^2(0,T)}$ is strictly convex, let $f_1, f_2 \in L^2(0,T)$ with $f_1 \ne f_2$ and $\rho\in (0,1)$. We have
\begin{align*}
\|\rho f_1 + (1-\rho)f_2\|^2_{L^2(0,T)} & = \rho^2\|f_1\|^2 + 2\rho(1-\rho)\langle f_1, f_2\rangle + (1-\rho)^2\|f_2\|^2_{L^2(0,T)}.
\end{align*}
It follows that
\begin{align*}
&\rho\|f_1\|^2_{L^2(0,T)} + (1-\rho)\|f_2\|^2_{L^2(0,T)} - \|\rho f_1 + (1-\rho)f_2\|^2_{L^2(0,T)} \\ 
& = \rho(1-\rho)\left[\|f_1\|^2_{L^2(0,T)} + \|f_2\|^2_{L^2(0,T)} -2\langle f_1, f_2\rangle \right] = \rho(1-\rho)\|f_1 - f_2\|^2_{L^2(0,T)}.
\end{align*}
This implies that
\begin{equation*}
\|\rho f_1 + (1-\rho)f_2\|^2_{L^2(0,T)} = \rho\|f_1\|^2_{L^2(0,T)} + (1-\rho)\|f_2\|^2_{L^2(0,T)} - \rho(1-\rho)\|f_1 - f_2\|^2_{L^2(0,T)}.
\end{equation*}
Since $f_1\ne f_2$ and $\rho\in (0,1)$ then $\rho(1-\rho)\|f_1 - f_2\|^2_{L^2(0,T)}>0$. Thus,
\begin{equation}\label{strict_convex}
\|\rho f_1 + (1-\rho)f_2\|^2_{L^2(0,T)} < \rho\|f_1\|^2_{L^2(0,T)} + (1-\rho)\|f_2\|^2_{L^2(0,T)},
\end{equation}
as desired.

To show the convexity of the map $f\mapsto\|F(f) - h^\delta\|^2_{L^2(0,T)}$, let $f_1, f_2\in L^2(0,T)$ and $\rho\in [0,1]$. Since $F$ is linear, then
\[
F(\rho f_1 + (1-\rho)f_2) - h^\delta = \rho(F(f_1) - h^\delta) + (1-\rho)(F(f_2) - h^\delta).
\]
Employing similar computation as before we obtain,
\begin{align*}
\|F\left(\rho f_1 + (1-\rho)f_2\right) - h^\delta\|^2_{L^2(0,T)} & = \|\rho(F(f_1) - h^\delta) + (1-\rho)(F(f_2) - h^\delta)\|^2_{L^2(0,T)}\\
& = \rho^2\|F(f_1)-h^\delta\|^2_{L^2(0,T)} + (1-\rho)^2\|F(f_2)-h^\delta\|^2_{L^2(0,T)}\\
& + 2\rho(1-\rho)\langle F(f_1)-h^\delta, F(f_2)-h^\delta\rangle. 
\end{align*}
Hence, we get
\begin{align*}
&\rho\|F(f_1)-h^\delta\|^2_{L^2(0,T)} + (1-\rho)\|F(f_2)-h^\delta\|^2_{L^2(0,T)}\\
& - \|\rho(F(f_1) - h^\delta) + (1-\rho)(F(f_2) - h^\delta)\|^2_{L^2(0,T)} = \rho(1-\rho)\|F(f_1) - F(f_2)\|^2_{L^2(0,T)}\ge 0,
\end{align*}
since $\rho\in [0,1]$. This implies that
\begin{align}\label{convexity}
\|\rho(F(f_1) - h^\delta) + (1-\rho)(F(f_2) - h^\delta)\|^2_{L^2(0,T)} &\le \rho\|F(f_1)-h^\delta\|^2_{L^2(0,T)} \notag\\
& + (1-\rho)\|F(f_2)-h^\delta\|^2_{L^2(0,T)},
\end{align}
showing that the map $f\mapsto \|F(f) - h^\delta\|^2_{L^2(0,T)}$ is convex.

We conclude with the strict convexity of the functional $J_\lambda$ for $\lambda>0$. To this end, let $f_1, f_2\in L^2(0,T)$ with $f_1\ne f_2$ and $\rho\in (0,1)$. Then using~\eqref{strict_convex} and~\eqref{convexity} we get the following.
\begin{align}
J_\lambda(\rho f_1 + (1-\rho)f_2) &= \|F\left(\rho f_1 + (1-\rho)f_2\right) - h^\delta\|^2_{L^2(0,T)} + \lambda\|\rho f_1 + (1-\rho)f_2\|^2_{L^2(0,T)}\notag\\
& \le \rho J_\lambda(f_1) + (1-\rho) J_\lambda(f_2).\notag
\end{align}
This shows that the functional $J_\lambda$ for $\lambda>0$ is strictly convex. Hence, the minimizer $f^\lambda$ in~\eqref{eq:minimizer} of $J_\lambda$ is unique. The proof of the theorem is complete.
\end{proof}

\section{Optimality Condition of the Minimizer}\label{sec:optimality}
In this section, we provide the Euler-Lagrange characterization of the unique minimizer $f^\lambda$. This optimality condition provides a variational characterization of the minimizer and reveals the role of the adjoint state in the structure of the solution. In addition, it is instrumental in establishing further regularity properties of the minimizer and provides a theoretical connection to classical adjoint-based reconstruction methods.

Although this characterization is not used directly in the proposed adjoint-free reconstruction method, it serves as an important theoretical benchmark and highlights the differences between adjoint-based and feedback-based approaches.

\begin{theorem}[First-order necessary optimality condition]
Let $\lambda>0$ and $h^\delta\in L^2(0,T)$. Let $f^\lambda\in L^2(0,T)$ be the unique minimizer of the Tikhonov functional defined in~\eqref{Tikhonov_functional} as given by Theorem~\ref{existence_uniqueness2}.
Then $f^\lambda$ satisfies the variational optimality condition
\[
J'(f^\lambda)(\vartheta)=0\qquad \text{for all }\vartheta\in L^2(0,T).
\]
Moreover, there exists an adjoint state $p$ such that the optimality condition can be written as
\[
\lambda f^\lambda(t)+\int_\Omega p(t,x)\,dx=0\quad \text{for a.e. }t\in(0,T).
\]
\end{theorem}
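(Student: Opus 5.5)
The plan has two stages: compute the Gâteaux derivative of $J_\lambda$ and set it to zero at the minimizer, then identify the adjoint $F^*$ through a backward parabolic problem.

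\textbf{Step 1: the variational condition.} Since $F$ is bounded and linear (Lemma~\ref{lem:linearity} and Lemma~\ref{lem:bounded}), $J_\lambda$ is a continuous quadratic functional on $L^2(0,T)$. For $\vartheta\in L^2(0,T)$ and $s\in\mathbb{R}$, expanding the squares gives
\[
J_\lambda(f^\lambda+s\vartheta)=J_\lambda(f^\lambda)+2s\bigl[\langle F(f^\lambda)-h^\delta,F(\vartheta)\rangle+\lambda\langle f^\lambda,\vartheta\rangle\bigr]+s^2\bigl(\|F(\vartheta)\|^2+\lambda\|\vartheta\|^2\bigr).
\]
Hence $J_\lambda$ is Gâteaux (indeed Fréchet) differentiable, with
\[
J'(f^\lambda)(\vartheta)=2\langle F(f^\lambda)-h^\delta,F(\vartheta)\rangle_{L^2(0,T)}+2\lambda\langle f^\lambda,\vartheta\rangle_{L^2(0,T)}.
\]
By Theorem~\ref{existence_uniqueness2}, $s=0$ minimizes the scalar quadratic $s\mapsto J_\lambda(f^\lambda+s\vartheta)$, so its derivative vanishes there. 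This gives $J'(f^\lambda)(\vartheta)=0$ for all $\vartheta$. Equivalently, $F^*(F f^\lambda-h^\delta)+\lambda f^\lambda=0$ in $L^2(0,T)$.

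\textbf{Step 2: identifying $F^*$.} Let $r:=F(f^\lambda)-h^\delta\in L^2(0,T)$. Define the adjoint state $p$ as the weak solution of the backward problem
\[
-\partial_t p-\nabla\cdot(a\nabla p)-\nabla\cdot(b\,p)+c\,p=r(t)\ \text{in }(0,T)\times\Omega,\qquad p(T,\cdot)=0,
\]
with the conormal condition $a\nabla p\cdot\nu+(b\cdot\nu)p=0$ on $\partial\Omega$. Here $a$ is symmetric, and the convection term is formally transposed. Substituting $\tau=T-t$ turns this into a forward problem of the type in Theorem~\ref{thm:well_posedness}. Its well-posedness in $L^2(0,T;H^1(\Omega))$ with $\partial_t p\in L^2(0,T;(H^1)^*)$ follows the same Galerkin/energy argument as Lemma~\ref{lem:bounded}. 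The bilinear form
\[
\int_\Omega \bigl(a\nabla p\cdot\nabla v+(b\cdot\nabla v)\,p+c\,p\,v\bigr)\,dx
\]
is bounded and satisfies a Gårding inequality, just as in the estimates~\eqref{convection_term}--\eqref{reaction_term}.

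Now let $u_\vartheta$ solve~\eqref{parab2}--\eqref{bc2} with source $\vartheta$. Test the $p$-equation with $u_\vartheta$ and the $u_\vartheta$-equation with $p$, then integrate over $(0,T)$. Three facts make the spatial operator terms cancel:
\begin{itemize}
\item the weak forms are constructed to be mutually adjoint;
\item the time terms combine via the integration-by-parts formula $\int_0^T\bigl(\langle\partial_t u,p\rangle+\langle\partial_t p,u\rangle\bigr)\,dt=[(u,p)]_0^T$;
\item the boundary terms vanish because $u_\vartheta(0)=0$ and $p(T)=0$.
\end{itemize}
This yields
\[
\langle r,F(\vartheta)\rangle_{L^2(0,T)}=\int_0^T\!\!\int_\Omega r(t)\,u_\vartheta\,dx\,dt=\int_0^T\vartheta(t)\int_\Omega p(t,x)\,dx\,dt .
\]
Thus $F^*r=\int_\Omega p(\cdot,x)\,dx$. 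The optimality condition then reads $\int_0^T\bigl(\lambda f^\lambda+\int_\Omega p\,dx\bigr)\vartheta\,dt=0$ for all $\vartheta$, and the fundamental lemma gives the pointwise a.e.\ identity. The factor $2$ cancels.

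\textbf{Main obstacle.} I expect the main difficulty to be the rigorous duality identity at the weak-solution level. It needs the product rule $\frac{d}{dt}(u,p)=\langle\partial_t u,p\rangle+\langle\partial_t p,u\rangle$ for $u,p$ in the space $W(0,T)=\{u\in L^2(0,T;H^1):\partial_t u\in L^2(0,T;(H^1)^*)\}$. This is the same Lions–Magenes result that underlies Theorem~\ref{thm:well_posedness}, and I will invoke it directly. The other point needing care is that the adjoint problem is written purely through its weak form, so that no regularity of $\nabla\cdot b$ is required. The bilinear form is exactly the transpose of the one for $u$, so the two weak formulations cancel term by term.
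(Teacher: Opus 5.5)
Your proposal is correct and follows the same route as the paper: take the directional derivative of $J_\lambda$ at the minimizer, introduce a backward adjoint problem with source $F(f^\lambda)-h^\delta$ and $p(T)=0$, use the duality identity $\langle F(f^\lambda)-h^\delta, F(\vartheta)\rangle_{L^2(0,T)}=\int_0^T \vartheta(t)\int_\Omega p(t,x)\,dx\,dt$, and conclude with the fundamental lemma. Your version is more careful than the paper's. The paper writes the adjoint with $-b\cdot\nabla p$ and the plain Neumann condition $a\nabla p\cdot\nu=0$, and only asserts the adjoint identity; that identity is justified only when $b$ is (weakly) divergence-free with $b\cdot\nu=0$ on $\partial\Omega$. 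Your transposed bilinear form (equivalently $-\nabla\cdot(bp)$ with the conormal condition $a\nabla p\cdot\nu+(b\cdot\nu)p=0$) gives the identity for general $b\in L^\infty$, and you actually prove it via the Lions--Magenes product rule in $W(0,T)$.
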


\begin{proof}
Let $\varepsilon>0$ and 
\begin{equation}\label{perturbation_source}
f^{\varepsilon} = f^\lambda + \varepsilon l
\end{equation}
be a perturbation of the minimizer $f^\lambda$ where $l\in L^2(0,T)$ is arbitrary. Let 
$u^\varepsilon = u_{f^\varepsilon}$ denote the corresponding weak solution of \eqref{parab2}-\eqref{bc2} with source term $f^\varepsilon$.
By linearity of the PDE in the source term, we have,
\begin{equation*}
u_{f^\varepsilon} = u_{f^\lambda} + \varepsilon u_l,
\end{equation*}
where $u_l$ is the weak solution of the same PDE with source term the perturbation $l$. Hence,
\begin{equation*}
F(f^\lambda + \varepsilon l)(t) = \int_\Omega u_{f^\lambda + \varepsilon l}(t,x)\, dx = \int_\Omega u_{f^\lambda}(t,x)\, dx + \varepsilon \int_\Omega u_l(t,x)\, dx = F(f^\lambda)(t) + \varepsilon \int_\Omega u_l(t,x)\, dx.
\end{equation*}
This shows that $F$ is Fréchet differentiable, with derivative $DF(f): L^2(0,T) \to L^2(0,T)$ given by:
\begin{equation*}
DF(f)(l)(t) = \int_\Omega u_l(t,x)\, dx.
\end{equation*}
Since $f^\lambda$ is the minimizer, we must have that
\begin{equation}\label{derivative_functional}
\frac{d}{d\varepsilon}\Bigg|_{\varepsilon = 0}J_\lambda(f^\varepsilon) = \frac{d}{d\varepsilon}\Bigg|_{\varepsilon = 0}\left[\int_0^T\left(\int_\Omega u^\varepsilon(t,x)\,dx - h^\delta(t)\right)^2\,dt + \lambda\int_0^T|f^\varepsilon(t)|^2\,dt \right] = 0.
\end{equation}
For the regularized term, we obtain 
\[
\frac{d}{d\varepsilon}\Bigg|_{\varepsilon = 0}\lambda\int_0^T|f^\varepsilon(t)|^2\,dt = \frac{d}{d\varepsilon}\Bigg|_{\varepsilon = 0}\lambda\int_0^T|f^\lambda(t) + \varepsilon l(t)|^2\,dt = 2\lambda\int_0^Tf^\lambda(t)l(t)\,dt.
\]
For the misfit term, we have that
\[
\frac{d}{d\varepsilon}\Bigg|_{\varepsilon = 0}\int_0^T\left(\int_\Omega u^\varepsilon(t,x)\,dx - h^\delta(t)\right)^2\,dt = \int_0^T2\left(\int_\Omega u_{f^\lambda}(t,x)\,dx-h^\delta(t)\right)\left(\int_\Omega v(t,x)\,dx\right)\,dt,
\]
where
\[
v := \frac{d}{d\varepsilon}u^\varepsilon\Bigg|_{\varepsilon = 0} = u_l.
\]
Since $u^\varepsilon$ satisfies PDE~\eqref{parab2}-\eqref{bc2} with source term $f^\varepsilon$ as in~\eqref{perturbation_source}, then $v$ satisfies
\begin{alignat*}{2}
\partial_t v - \nabla \cdot (a(t,x) \nabla v) + b(t,x) \cdot \nabla v + c(t,x) v &= l(t), &&\quad (t,x) \in (0,T)\times \Omega \\
v(0,x) &= 0, &&\quad x \in\Omega  \\
a(t,x) \nabla v \cdot \nu &= 0, &&\quad (t,x) \in (0,T)\times\partial\Omega .
\end{alignat*}
Observe that the derivative of the misfit term is expressed in terms of the variation $v$ which is implicitly defined via the above PDE with unknown source $l(t)$. We therefore need to eliminate it. To this end, let $p(t,x)$ be the solution to the adjoint PDE
\begin{alignat*}{2}
-\partial_t p - \nabla \cdot (a(t,x) \nabla p) - b(t,x) \cdot \nabla p + c(t,x) p &= \int_\Omega u_{f^\lambda}(t,x)\,dx - h^\delta(t), &&\quad (t,x) \in (0,T)\times \Omega \\
p(T,x) &= 0, &&\quad x \in\Omega \\
a(t,x) \nabla p \cdot \nu &= 0, &&\quad (t,x) \in (0,T)\times\partial\Omega .
\end{alignat*}
Note that the quantity to the right-hand side of the above equation
\[
F(f^\lambda)(t)-h^\delta(t)
= \int_\Omega u_{f^\lambda}(t,x)\,dx - h^\delta(t)
\]
is a scalar-valued function of time. When it appears as a source term in the adjoint equation, we identify it with the spatially constant function on $\Omega$, i.e.,
\[
(F(f^\lambda)(t)-h^\delta(t))\,1_\Omega(x).
\]
By the adjoint identity, we have
\[
\int_0^T\left(\int_\Omega u_{f^\lambda}(t,x)\,dx-h^\delta(t)\right)\left(\int_\Omega v(t,x)\,dx\right)\,dt = \int_0^Tl(t)\left(\int_\Omega p(t,x)\,dx\right)\,dt.
\]
Thus, from~\eqref{derivative_functional} it follows that
\[
\int_0^Tl(t)\left(\int_\Omega p(t,x)\,dx + \lambda f^\lambda(t)\right)\,dt = 0\quad\text{for all $l\in L^2(0,T)$}.
\]
Hence,
\[
\int_\Omega p(t,x)\,dx + \lambda f^\lambda(t) = 0 \quad\text{for a.e. $t\in (0,T)$.}
\]
\end{proof}

The following result shows that additional regularity on the adjoint $p$ implies an improved regularity of the unique minimizer. 
\begin{corollary}[Improved regularity of the minimizer]
Assume that the adjoint state satisfies
\[
p\in H^1(0,T;L^2(\Omega)).
\]
Then the minimizer $f^\lambda$ belongs to $H^1(0,T)$.
\end{corollary}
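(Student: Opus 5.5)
The plan is to start from the pointwise form of the optimality condition in the preceding theorem,
\[
f^\lambda(t) = -\frac{1}{\lambda}\int_\Omega p(t,x)\,dx =: -\frac{1}{\lambda}P(t)\qquad\text{for a.e. } t\in(0,T),
\]
and to transfer regularity from $p$ to $f^\lambda$ through the linear map $\mathcal{I}: L^2(\Omega)\to\mathbb{R}$, $\mathcal{I}w:=\int_\Omega w\,dx$. This map is bounded: by Cauchy--Schwarz, $|\mathcal{I}w|\le |\Omega|^{1/2}\|w\|_{L^2(\Omega)}$. Since $\lambda>0$ is fixed, it suffices to show that $P=\mathcal{I}\circ p\in H^1(0,T)$.

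First, $P\in L^2(0,T)$. Because $p\in H^1(0,T;L^2(\Omega))\subset L^2(0,T;L^2(\Omega))$, the function $P$ is measurable and
\[
\int_0^T|P(t)|^2\,dt\le |\Omega|\,\|p\|_{L^2(0,T;L^2(\Omega))}^2.
\]

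Second, I identify the weak derivative of $P$. Let $\varphi\in C_c^\infty(0,T)$. By the definition of the vector-valued weak derivative, $\int_0^T p(t)\varphi'(t)\,dt = -\int_0^T \partial_t p(t)\varphi(t)\,dt$ as Bochner integrals in $L^2(\Omega)$. Bounded linear functionals commute with Bochner integrals, so applying $\mathcal{I}$ to both sides gives
\[
\int_0^T P(t)\varphi'(t)\,dt = -\int_0^T \Big(\int_\Omega \partial_t p(t,x)\,dx\Big)\varphi(t)\,dt .
\]
Hence $P' = \int_\Omega \partial_t p\,dx$ weakly, and the same Cauchy--Schwarz bound yields $\|P'\|_{L^2(0,T)}\le |\Omega|^{1/2}\|\partial_t p\|_{L^2(0,T;L^2(\Omega))}$. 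Equivalently, one could use the absolutely continuous representative $p\in C([0,T];L^2(\Omega))$ with $p(t)=p(0)+\int_0^t\partial_t p$ and apply $\mathcal{I}$ directly.

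Combining the two steps, $P\in H^1(0,T)$, and therefore $f^\lambda=-P/\lambda\in H^1(0,T)$ with
\[
\|f^\lambda\|_{H^1(0,T)}\le \frac{|\Omega|^{1/2}}{\lambda}\|p\|_{H^1(0,T;L^2(\Omega))}.
\]
Note that $f^\lambda$ and $-P/\lambda$ agree only almost everywhere, but they define the same element of $L^2(0,T)$, which is all the $H^1$ statement requires.

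The only point needing care is the interchange of spatial integration with the time derivative in the second step. This is resolved by the functional-commutes-with-Bochner-integral argument, so I do not expect a real obstacle.
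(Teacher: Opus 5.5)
Your proposal is correct and follows essentially the same route as the paper. Both arguments start from the pointwise optimality condition $f^\lambda=-\frac{1}{\lambda}\int_\Omega p\,dx$ and push $H^1$ regularity through the bounded functional $\mathcal{I}(v)=\int_\Omega v\,dx$, obtaining $\frac{d}{dt}f^\lambda=-\frac{1}{\lambda}\int_\Omega \partial_t p\,dx$ with the same $|\Omega|^{1/2}/\lambda$ bound. The only difference is that you justify the interchange of $\mathcal{I}$ with the weak time derivative explicitly (bounded functionals commute with Bochner integrals) and also bound $\|P\|_{L^2(0,T)}$, whereas the paper simply cites standard Sobolev theory for Banach-valued functions.
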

\begin{proof}
Define the mapping
\[
\mathcal{I}:L^2(\Omega)\to\mathbb{R},\qquad 
\mathcal{I}(v)=\int_\Omega v(x)\,dx.
\]
This is a bounded linear functional on $L^2(\Omega)$ since $|\mathcal{I}(v)|\le |\Omega|^{1/2}\|v\|_{L^2(\Omega)}$.

If $p\in H^1(0,T;L^2(\Omega))$, then by the standard Sobolev theory for Banach-valued functions, the composition
$t\mapsto \mathcal{I}(p(t))$ belongs to $H^1(0,T)$ and satisfies
\[
\frac{d}{dt}\mathcal{I}(p(t))
=
\mathcal{I}(\partial_t p(t))
\quad\text{for a.e. }t\in(0,T).
\]
It follows that
\[
\frac{d}{dt} f^\lambda(t) =-\frac1\lambda\int_\Omega \partial_t p(x,t)\,dx
\quad \text{a.e. }t\in(0,T),
\]
and
\[
\Big\|\frac{d}{dt} f^\lambda(t)\Big\|_{L^2(0,T)} \le \frac{|\Omega|^{1/2}}{\lambda}\|\partial_tp\|_{L^2(0,T;L^2(\Omega))}.
\]
Therefore $f^\lambda\in H^1(0,T)$.
\end{proof}

\section{Weak-norm stability and conditional H\"older stability}\label{sec:cond_stab}
In this section, we analyze the stability properties of the inverse source problem. For $f\in L^2(0,T)$, let $u$ be the weak solution of~\eqref{parab2}--\eqref{bc2}, which under our assumptions, satisfies the standard Lions--Magenes regularity
$u\in L^2(0,T;H^1(\Omega))$ and $\partial_t u\in L^2(0,T;(H^1(\Omega))^*)$. Now, recall the observation operator
\begin{equation}\label{eq:obs_F}
(Ff)(t):=\int_\Omega u(t,x)\,dx,\qquad t\in(0,T).
\end{equation}
Consider the reduced problem (difference of two states) with homogeneous data:
\[
\partial_t u + A(t)u = f(t)\,1_\Omega \quad \text{in }(0,T)\times\Omega,
\qquad u(0)=0,\qquad a\nabla u\cdot\nu=0,
\]
where
\begin{equation}\label{eq:A_operator}
A(t)u := -\nabla\cdot(a(t,\cdot)\nabla u) + b(t,\cdot)\cdot\nabla u + c(t,\cdot)u,
\end{equation}
understood in the variational sense with Neumann boundary condition.

Parabolic evolution theory yields an evolution family $U(t,s)$ such that the forced solution admits
Duhamel's formula
\[
u(t)=\int_0^t U(t,s)\big(f(s)\,1_\Omega\big)\,ds.
\]
The existence and properties of the evolution family $U(t,s)$ associated with non-autonomous
parabolic operators, including boundedness on $L^2(\Omega)$, are classical
and can be found, for example, in \cite[Chapter~5]{Pazy1983}.

Applying the observation operator~\eqref{eq:obs_F} and using Fubini's theorem (Fubini’s theorem is justified since $U(t,s)$ is uniformly bounded on $L^2(\Omega)$
and $f\in L^2(0,T)$) give the data operator
\begin{equation}\label{eq:operator_data}
    (Ff)(t)=\int_0^t K(t,s)\,f(s)\,ds,
\end{equation}
where $K(t,s)$ is the Volterra kernel associated with the forward operator and is given by
\begin{equation}\label{eq:kernel}
K(t,s):=\int_\Omega \big(U(t,s)1_\Omega\big)(x)\,dx, \quad 0\le s\le t\le T.
\end{equation}
Moreover, since $U(t,t)=I$, it immediately follows that $K(t,t)=|\Omega|$ for all $t\in[0,T]$.

The following lemma provides an upper bound on $\|U(t,s)\|_{L^2\to L^2}$ and shows that $s\mapsto K(t,s)$ is absolutely continuous. Absolute continuity of $K(t,\cdot)$ is used to ensure that
$\partial_s K(t,\cdot)$ exists as an $L^1(0,t)$ weak derivative. This regularity is precisely what
justifies integration by parts in the Volterra variable $s$.

\begin{lemma}\label{lem:U_and_k}
Assume that Assumptions {\it (A1)} and {\it (A2)} hold. Let $U(t,s)$ denote the evolution family associated with the
homogeneous parabolic operator $A(t)$ in~\eqref{eq:A_operator} under homogeneous Neumann boundary conditions, and let the kernel $K$ be defined by~\eqref{eq:kernel}. Then the following statements hold:
\begin{description}
\item[(a)] There exists a constant $C_U = C_U(\sigma,\|b\|_{L^\infty},\|c\|_{L^\infty},T)>0$ such that
\[
\|U(t,s)\|_{L^2(\Omega)\to L^2(\Omega)} \le C_U,
\quad \text{for all $(t,s)\in \;\Delta:=\{(t,s): 0\le s\le t\le T\}$}.
\]

\item[(b)] For each fixed $t\in(0,T]$, the mapping $s\mapsto K(t,s)$ belongs to $W^{1,1}(0,t)$ and
admits an absolutely continuous representative. Moreover, its weak derivative satisfies
\[
\partial_s K(t,s) = -\int_\Omega \big(U(t,s)c(s,\cdot)\big)(x)\,dx
\quad \text{for a.e. } s\in(0,t),
\]
and there exists a constant $C_K = C_K(\sigma,\|b\|_{L^\infty},\|c\|_{L^\infty},T,|\Omega|)>0$ such that
\[
|\partial_s K(t,s)| \le C_K
\qquad \text{for a.e. } (t,s)\in\Delta.
\]
\end{description}
\end{lemma}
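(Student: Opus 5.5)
The plan is to prove (a) by the energy estimate already used in Lemma~\ref{lem:bounded}, and (b) by a duality argument that turns $K(t,\cdot)$ into the spatial integral of the solution of a backward adjoint problem.

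For (a), fix $s\in[0,T)$ and $v\in L^2(\Omega)$, and let $w(\tau)=U(\tau,s)v$ for $\tau\in[s,T]$. This is the weak solution on $(s,T)$ of $\partial_\tau w+A(\tau)w=0$ with homogeneous Neumann data and $w(s)=v$. Theorem~\ref{thm:well_posedness}, applied on the shifted interval, gives $w\in L^2(s,T;H^1(\Omega))\cap C([s,T];L^2(\Omega))$ and the energy identity. I then repeat the estimates \eqref{diffusion_term}--\eqref{reaction_term} with $f\equiv0$: choose $\varepsilon<2\sigma$ and drop the gradient term. This gives
\[
\frac{d}{d\tau}\|w(\tau)\|_{L^2(\Omega)}^2\le C_1\|w(\tau)\|_{L^2(\Omega)}^2 ,
\]
with $C_1$ as in \eqref{c12_constants} (the "$+1$" coming from the source term can simply be dropped). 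Gronwall then yields $\|U(t,s)v\|_{L^2(\Omega)}\le e^{C_1(t-s)/2}\|v\|_{L^2(\Omega)}$. Hence (a) holds with $C_U=e^{C_1T/2}$, which depends only on $\sigma,\|b\|_{L^\infty},\|c\|_{L^\infty},T$.

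For (b), I would avoid differentiating $U(t,s)$ in $s$ on the domain of $A(s)$, because $1_\Omega$ and $c(s,\cdot)$ are only $L^2$ and $a$ is merely $L^\infty$ in $x$. Instead, fix $t$ and set $q(s):=U(t,s)^*1_\Omega$. Then
\[
K(t,s)=(U(t,s)1_\Omega,1_\Omega)_{L^2(\Omega)}=\int_\Omega q(s,x)\,dx .
\]
The function $q$ is the weak solution of the backward adjoint problem
\[
-\partial_s q+A(s)^*q=0 \quad\text{on }(0,t),\qquad q(t)=1_\Omega ,
\]
with the natural (conormal) boundary condition. Here $A(s)^*$ is defined variationally through the adjoint bilinear form
\[
\langle A(s)^*q,\varphi\rangle=\int_\Omega a\nabla q\cdot\nabla\varphi+(b\cdot\nabla\varphi)\,q+c\,q\,\varphi\,dx .
\]
Well-posedness of this problem, with $q\in L^2(0,t;H^1)$ and $\partial_sq\in L^2(0,t;(H^1)^*)$, follows exactly as in Theorem~\ref{thm:well_posedness} after the time reversal $s\mapsto t-s$. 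The identification of $q$ with $U(t,s)^*1_\Omega$ is the standard duality: pair the forward solution from time $s$ with $q$ and check that $\tau\mapsto (U(\tau,s)v,q(\tau))$ is constant.

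I then test the adjoint equation with the constant function $\varphi\equiv1\in H^1(\Omega)$. Since $\nabla1=0$, only the reaction term survives, so in the sense of distributions on $(0,t)$
\[
\frac{d}{ds}\int_\Omega q(s)\,dx=\int_\Omega c(s,x)\,q(s,x)\,dx=\bigl(c(s,\cdot),U(t,s)^*1_\Omega\bigr)=\int_\Omega\bigl(U(t,s)c(s,\cdot)\bigr)(x)\,dx .
\]
Because $s\mapsto\langle\partial_sq(s),1\rangle$ lies in $L^2(0,t)\subset L^1(0,t)$, this shows $K(t,\cdot)\in W^{1,1}(0,t)$ with an absolutely continuous representative. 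The bound follows from Cauchy--Schwarz and (a):
\[
|\partial_sK(t,s)|\le|\Omega|^{1/2}C_U\|c(s,\cdot)\|_{L^2(\Omega)}\le|\Omega|\,C_U\|c\|_{L^\infty}=:C_K .
\]

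The main obstacle is the sign and the rigorous justification of the derivative formula. The formal computation $\partial_sU(t,s)=U(t,s)A(s)$ together with $A(s)1_\Omega=c(s,\cdot)$ (note that $1_\Omega$ satisfies the Neumann condition trivially) gives the $+$ sign in the autonomous check $U(t,s)=e^{-(t-s)A}$. I would therefore verify the sign carefully against the stated formula, which has a minus. Whatever sign the convention produces, the $W^{1,1}$ regularity and the bound on $|\partial_sK|$ are unaffected. The rest of the rigorous step is the duality identification of $q$, which I expect to be routine given Theorem~\ref{thm:well_posedness}.
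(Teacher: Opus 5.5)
Your proof is correct. Part (a) is the same energy/Gr\"onwall argument as the paper's: the paper's $\gamma=2\|c\|_{L^\infty}+\|b\|_{L^\infty}^2/\sigma$ is your $C_1$ with $\varepsilon=\sigma$ and the $+1$ dropped.

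Part (b) takes a genuinely different route. The paper invokes $\partial_sU(t,s)=-U(t,s)A(s)$ ``in the operator sense'' and applies it to $1_\Omega$, using $A(s)1_\Omega=c(s,\cdot)$. You instead write $K(t,s)=\int_\Omega U(t,s)^*1_\Omega\,dx$, identify $U(t,\cdot)^*1_\Omega$ with the solution of the backward adjoint problem, and test that problem with $\varphi\equiv 1$. The paper's route is a one-liner once the evolution-family calculus is granted. Yours gives rigor under the stated hypotheses. Differentiability of $s\mapsto U(t,s)x$ is a Kato--Tanabe-type result (Pazy, Ch.~5), and it presupposes H\"older-in-time regularity of the whole operator, which (A1) does not provide for $b$ and $c$. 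You need only Lions' variational theory and the product rule in $W(0,t)$. As a bonus you get $K(t,\cdot)\in H^1(0,t)$, indeed Lipschitz.

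Your computation also settles the sign. Your $+$ is correct, and the minus in the statement is an error. The paper's $\partial_sU(t,s)=-U(t,s)A(s)$ is Pazy's convention for $u'=A(t)u$, not for $\partial_t u+A(t)u=0$. For $a=1$, $b=0$, $c=1$ one has $U(t,s)1_\Omega=e^{-(t-s)}1_\Omega$, so $\partial_sK(t,s)=|\Omega|e^{-(t-s)}=+\int_\Omega U(t,s)c\,dx$. The error is harmless downstream, since Lemma~\ref{lem:LB_Hminus1} uses only $K(t,t)=|\Omega|$ and $|\partial_sK|\le C_K$.

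One step you state too quickly is well-posedness of the adjoint problem ``exactly as in Theorem~\ref{thm:well_posedness}''. The adjoint form carries the drift as $\int_\Omega (b\cdot\nabla\varphi)\,q\,dx$, i.e.\ $-\nabla\cdot(bq)$ with the conormal condition $a\nabla q\cdot\nu+(b\cdot\nu)q=0$, so it is not literally the same PDE. However, the form is bounded and testing with $\varphi=q$ gives the identical energy inequality, so Lions' abstract theorem applies unchanged.
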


\begin{proof}
    Let $v\in L^2(\Omega)$ and consider $u(t,\cdot)=U(t,s)v$ the unique weak solution of
\[
\partial_t u(t,\cdot) + A(t)u(t,\cdot)=0
\quad\text{in }(s,T)\times\Omega,
\qquad a(t,\cdot)\nabla u(t,\cdot)\cdot\nu=0,
\qquad u(s,\cdot)=v.
\]
By mimicking the computation carried out in~\eqref{weak_form}--\eqref{gronwall} with obvious modifications we get
\[
\frac{d}{dt}\|u(t,\cdot)\|_{L^2(\Omega)}^2 \le \gamma \|u(t,\cdot)\|_{L^2(\Omega)}^2,
\qquad
\gamma:=2\|c\|_{L^\infty((0,T)\times\Omega)}+\frac{\|b\|_{L^\infty((0,T)\times\Omega)}^2}{\sigma}.
\]
By Gr\"onwall,
\[
\|U(t,s)v\|_{L^2(\Omega)}\le e^{\frac{\gamma}{2}(t-s)}\|v\|_{L^2(\Omega)}.
\]
Therefore, taking the supremum yields
\[
\|U(t,s)\|_{L^2\to L^2}\le e^{\frac{\gamma}{2}(t-s)}\le e^{\frac{\gamma}{2}T}=:C_U.
\]
This establishes part(a). To show part(b), observe that since $\partial_sU(t,s) = -U(t,s)A(s)$ in the operator sense on $L^2(\Omega)$, then~\eqref{eq:kernel} implies that $\partial_sK(t,s)$ exists and
\[
\partial_sK(t,s) = -\int_\Omega \left(U(t,s)c(s,\cdot)\right)(x)\,dx.
\]
Using Cauchy-Schwarz and the result of part(a), we obtain
\[
|\partial_sK(t,s)| \le |\Omega|^{1/2}\|U(t,s)c(s,\cdot)\|_{L^2(\Omega)} \le |\Omega|^{1/2}C_U\|c(s,\cdot)\|_{L^2(\Omega)} \le |\Omega|C_U\|c\|_{L^\infty((0,T)\times\Omega)}=:C_K.
\]
So, for each fixed $t$, $|\partial_sK(t,s)| \le C_K$ for a.e. $s\in (0,t)$, where $C_K$ is a constant independent of $s$. Thus, $\partial_sK(t,s) \in L^\infty(0,t)$, which implies that $\partial_sK(t,s) \in L^1(0,t)$. Hence, $K(t,\cdot)\in W^{1,1}(0,t)$ and has an absolutely continuous representative
\[
K(t,s) = K(t,0) + \int_0^s\partial_\chi K(t,\chi)\,d\chi.
\]
This shows part(b) and completes the proof of the lemma.
\end{proof}

The observation operator $F$ as defined in~\eqref{eq:operator_data} is a Volterra operator in time and therefore acts as a time-integration
operator on the unknown source. As a consequence, high-frequency oscillations in the source are
strongly damped in the data, leading to a loss of information. This smoothing effect prevents any
estimate of Lipschitz stability in $L^2(0,T)$ and is the hallmark of severe ill-posedness. Only weaker
norms, such as $H^{-1}(0,T)$, can be controlled by the data without additional assumptions. This is the purpose of the following lemma.

\begin{lemma}[Weak-norm stability estimate]\label{lem:LB_Hminus1}
Assume that the forward operator $F:L^2(0,T)\to L^2(0,T)$ admits a Volterra representation
\[
(Ff)(t)=\int_0^t K(t,s)f(s)\,ds,
\]
where the kernel $K$ satisfies the properties stated in Lemma~\ref{lem:U_and_k}.
Then there exists a constant $C_0>0$ such that
\[
\|f\|_{H^{-1}(0,T)} \le C_0\,\|F(f)\|_{L^2(0,T)}
\qquad \text{for all } f\in L^2(0,T).
\]
\end{lemma}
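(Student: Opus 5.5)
The plan is to pass from $f$ to its primitive
\[
(Pf)(t):=\int_0^t f(s)\,ds,
\]
and show two things: that $\|f\|_{H^{-1}(0,T)}\le \|Pf\|_{L^2(0,T)}$, and that $\|Pf\|_{L^2(0,T)}\le C_0\|F(f)\|_{L^2(0,T)}$. The second bound comes from rewriting the first-kind Volterra equation \eqref{eq:operator_data} as a second-kind Volterra equation for $Pf$, using integration by parts in $s$. The facts $K(t,t)=|\Omega|>0$ and $|\partial_sK|\le C_K$ from Lemma~\ref{lem:U_and_k} are what make this work.

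\emph{Step 1 (duality).} We take $H^{-1}(0,T)=(H^1_0(0,T))^*$. For $\varphi\in H^1_0(0,T)$, integration by parts gives $\int_0^T f\varphi\,dt=-\int_0^T (Pf)\varphi'\,dt$, since $Pf\in H^1(0,T)$, $(Pf)(0)=0$ and $\varphi(T)=0$. Hence $|\langle f,\varphi\rangle|\le \|Pf\|_{L^2}\|\varphi\|_{H^1_0}$, which gives $\|f\|_{H^{-1}}\le\|Pf\|_{L^2}$.

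\emph{Step 2 (integration by parts in the kernel).} Fix $t\in(0,T]$. By Lemma~\ref{lem:U_and_k}(b), $K(t,\cdot)$ is absolutely continuous on $[0,t]$, and $Pf$ is absolutely continuous with $(Pf)(0)=0$. The product rule for absolutely continuous functions then gives
\[
(Ff)(t)=\int_0^t K(t,s)\,(Pf)'(s)\,ds = |\Omega|\,(Pf)(t)-\int_0^t \partial_sK(t,s)\,(Pf)(s)\,ds ,
\]
where we used $K(t,t)=|\Omega|$. This identity holds for every $t\in(0,T]$.

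\emph{Step 3 (Gronwall).} From Step 2, $w(t):=|(Pf)(t)|$ satisfies
\[
w(t)\le |\Omega|^{-1}|(Ff)(t)| + \frac{C_K}{|\Omega|}\int_0^t w(s)\,ds .
\]
The integral form of Gronwall's inequality then yields
\[
w(t)\le |\Omega|^{-1}|(Ff)(t)| + \frac{C_K}{|\Omega|^2}\int_0^t e^{C_K(t-s)/|\Omega|}\,|(Ff)(s)|\,ds .
\]
The second term is bounded by $\frac{C_K}{|\Omega|^2}e^{C_KT/|\Omega|}\sqrt{T}\,\|Ff\|_{L^2}$ using Cauchy--Schwarz. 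Taking $L^2(0,T)$ norms gives $\|Pf\|_{L^2}\le C_0\|Ff\|_{L^2}$ with
\[
C_0=|\Omega|^{-1}+C_K|\Omega|^{-2}\,T\,e^{C_KT/|\Omega|}.
\]
Combining with Step 1 proves the lemma.

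\emph{Main obstacle.} The delicate step is Step 2. It needs the integration by parts to be valid for every fixed $t$, with the boundary value $K(t,t)=|\Omega|$ attained by the absolutely continuous representative. It also needs $(t,s)\mapsto\partial_sK(t,s)$ to be measurable on $\Delta$ and uniformly bounded, so that the Volterra/Gronwall argument is legitimate. I would rely on Lemma~\ref{lem:U_and_k}(b) for the bound $C_K$ and on strong continuity of $U(t,s)$ (hence continuity of $K$ on $\Delta$) to identify the endpoint value. The rest is routine.
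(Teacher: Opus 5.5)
Your proposal is correct and follows the paper's proof essentially step for step: the primitive $Q=Pf$, the duality bound $\|f\|_{H^{-1}(0,T)}\le\|Q\|_{L^2(0,T)}$, integration by parts in $s$ using $K(t,t)=|\Omega|$ to get a second-kind Volterra equation, and Gronwall with rate $\beta=C_K/|\Omega|$. The only difference is cosmetic. You bound the convolution term by Cauchy--Schwarz, which gives the slightly larger constant $|\Omega|^{-1}+C_K|\Omega|^{-2}Te^{C_KT/|\Omega|}$, whereas the paper uses Young's convolution inequality to obtain $C_0=e^{\beta T}/|\Omega|$.
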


\begin{proof}
Let $f\in L^2(0,T)$ and define $Q(t):=\int_0^t f(s)\,ds$, so that $Q\in H^1(0,T)$, $Q(0)=0$, and $Q'=f$.
By definition of the dual norm and integration by parts, using $\varphi(0)=\varphi(T)=0$ for $\varphi\in H_0^1(0,T)$), we have
\[
\left|\int_0^T f\varphi\right|
\le \|Q\|_{L^2(0,T)}\|\varphi'\|_{L^2(0,T)}
\le \|Q\|_{L^2(0,T)}\|\varphi\|_{H_0^1(0,T)}.
\]
Taking the supremum yields
\begin{equation}\label{inequality1}
\|f\|_{H^{-1}(0,T)}\le \|Q\|_{L^2(0,T)}.
\end{equation}
Next, we find an estimate for $\|Q\|_{L^2(0,T)}$. Employing~\eqref{eq:operator_data} we obtain
\[
(Ff)(t)=\int_0^t K(t,s)\,Q'(s)\,ds.
\]
Using the fact that $K(t,\cdot)\in W^{1,1}(0,t)$ from Lemma~\ref{lem:U_and_k}(b), an integration by parts in $s$ yields a Volterra equation of the second kind for $Q$:
\begin{equation}\label{eq:Q_eq}
Q(t)=\frac{1}{|\Omega|}(Ff)(t)+\frac{1}{|\Omega|}\int_0^t \partial_sK(t,s)Q(s)\,ds.
\end{equation}
Let $z(t):=\frac{1}{|\Omega|}(Ff)(t)$. Using Lemma~\ref{lem:U_and_k}(b) and $\beta:= C_K/|\Omega|$, Equation~\eqref{eq:Q_eq}
implies 
\[
|Q(t)|\le |z(t)|+\beta\int_0^t |Q(s)|\,ds.
\]
Define $m(t):=\int_0^t |Q(s)|\,ds$. Then $m'(t)=|Q(t)|$ a.e. and
\[
m'(t)\le |z(t)|+\beta m(t),\qquad m(0)=0.
\]
By Gr\"onwall's inequality,
\[
m(t)\le \int_0^t e^{\beta(t-s)}|z(s)|\,ds.
\]
Therefore,
\[
|Q(t)|\le |z(t)|+\beta\int_0^t e^{\beta(t-s)}|z(s)|\,ds = |z(t)| + (w*|z|)(t), \quad w(t):=\beta e^{\beta t}\mathbf{1}_{t\ge 0}.
\]
Taking $L^2(0,T)$-norms and using the triangle inequality, then applying Young's inequality for convolutions~\cite[Chapter 1]{Grafakos2014} ($L^1*L^2\to L^2$) yields
\[
\|Q\|_{L^2(0,T)}\le \|z\|_{L^2(0,T)} + (e^{\beta T}-1)\|z\|_{L^2(0,T)}
= e^{\beta T}\|z\|_{L^2(0,T)}.
\]
Thus,
\[
\|Q\|_{L^2(0,T)} \le \frac{e^{\beta T}}{|\Omega|}\|F(f)\|_{L^2(0,T)}.
\]
Combining this inequality with~\eqref{inequality1} proves
the lemma with $C_0:=\frac{e^{\beta T}}{|\Omega|}$.
\end{proof}

Stability in stronger norms cannot hold unconditionally for this inverse source problem, due to the Volterra-type smoothing of the forward map.The following theorem
imposes a priori smoothness to establish conditional stability in $L^2(0,T)$.
\begin{theorem}[Conditional H\"older stability on $H^r$-bounded sets]\label{thm:conditional_stability}
Assume that \textit{(A1)--(A3)} hold, and let $F:L^2(0,T)\to L^2(0,T)$ denote the forward operator
defined by
\[
(Ff)(t) := \int_\Omega u_f(x,t)\,dx,
\]
where $u_f$ is the unique weak solution of the parabolic problem
\eqref{parab1}--\eqref{bc} corresponding to the source $f$. Let $f_1,f_2\in L^2(0,T)$ be two admissible sources satisfying the a priori bounds
\begin{equation}\label{eq:source_bound}
\|f_1\|_{H^r(0,T)}\le M,\qquad \|f_2\|_{H^r(0,T)}\le M,
\end{equation}
for some $r>0$ and $M>0$. Then there exists a constant $\widetilde{C}>0$ depending only on $M$, $r$, $T$, the
coefficients of the operator, and $\Omega$, such that
\begin{equation}\label{eq:cond_holder_final}
\|f_1-f_2\|_{L^2(0,T)}
\le
\widetilde{C}\,\|F(f_1)-F(f_2)\|_{L^2(0,T)}^{\frac{r}{r+1}}.
\end{equation}
\end{theorem}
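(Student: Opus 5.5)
Set $f:=f_1-f_2$. By Lemma~\ref{lem:linearity}, $F(f_1)-F(f_2)=F(f)$, where $F$ is the adjusted (homogeneous-data) forward map; the known contributions $u_\zeta,u_g,u_{u_0}$ cancel in the difference. The a priori bound \eqref{eq:source_bound} gives $\|f\|_{H^r(0,T)}\le 2M$. The plan is to interpolate between the weak-norm estimate of Lemma~\ref{lem:LB_Hminus1}, which controls $\|f\|_{H^{-1}(0,T)}$ by $\|F(f)\|_{L^2(0,T)}$, and the $H^r$ bound. This is an interpolation inequality of the form
\[
\|f\|_{L^2(0,T)}\le C_I\,\|f\|_{H^{-1}(0,T)}^{\frac{r}{r+1}}\,\|f\|_{H^{r}(0,T)}^{\frac{1}{r+1}},
\]
since $L^2=H^0$ sits at parameter $\theta=1/(r+1)$ between $H^{-1}$ and $H^r$: indeed $0=(1-\theta)(-1)+\theta r$ exactly when $\theta=1/(r+1)$. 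Granting this inequality,
\[
\|f\|_{L^2}\le C_I (2M)^{\frac{1}{r+1}} C_0^{\frac{r}{r+1}}\|F(f)\|_{L^2}^{\frac{r}{r+1}},
\]
which is \eqref{eq:cond_holder_final} with $\widetilde C=C_I(2M)^{1/(r+1)}C_0^{r/(r+1)}$.

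The main obstacle is justifying the interpolation inequality on the bounded interval. Here $H^{-1}(0,T)$ is the dual of $H_0^1(0,T)$, and this space does not interpolate with $H^r(0,T)$ by a naive Fourier argument, because $f$ has no boundary vanishing. I would avoid abstract interpolation theory by working directly with the antiderivative $Q(t)=\int_0^t f$ already used in the proof of Lemma~\ref{lem:LB_Hminus1}. That proof actually gives the stronger bound $\|Q\|_{L^2(0,T)}\le C_0\|F(f)\|_{L^2(0,T)}$. Moreover $Q\in H^{r+1}(0,T)$ with $\|Q\|_{H^{r+1}}\le C\|f\|_{H^r}$, and $f=Q'$.

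So it suffices to prove the standard interval Gagliardo--Nirenberg/interpolation inequality
\[
\|Q'\|_{L^2}\le C\|Q\|_{L^2}^{\frac{r}{r+1}}\|Q\|_{H^{r+1}}^{\frac{1}{r+1}}.
\]
This holds on $(0,T)$ for $Q\in H^{r+1}(0,T)$ with no boundary conditions. To prove it, extend $Q$ to $\mathbb R$ by a bounded extension operator $E$ that is simultaneously bounded $L^2\to L^2$ and $H^{r+1}\to H^{r+1}$; a Stein-type or reflection extension of sufficiently high order works. Then apply Plancherel together with Hölder's inequality, in the form
\[
|\xi|^2\le (1+|\xi|^2)^{\frac{1}{r+1}}\cdot 1^{\frac{r}{r+1}}\ \text{weighted appropriately},
\]
that is,
\[
\int|\xi|^2|\widehat{EQ}|^2\le \Bigl(\int|\widehat{EQ}|^2\Bigr)^{\frac{r}{r+1}}\Bigl(\int(1+|\xi|^2)^{r+1}|\widehat{EQ}|^2\Bigr)^{\frac{1}{r+1}}.
\]

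There is one subtlety. The inequality in this form carries the full $H^{r+1}$ norm, which includes $\|Q\|_{L^2}$. This is harmless because $\|Q\|_{H^{r+1}}\lesssim \|f\|_{H^r}+\|Q\|_{L^2}$ and $\|Q\|_{L^2}\le T\|f\|_{L^2}\lesssim M$. The extra term can therefore be absorbed into the $M$-dependence of $\widetilde C$. This is also why $\widetilde C$ depends on $M$, $r$, $T$, the coefficients (through $C_0=e^{\beta T}/|\Omega|$) and $\Omega$, as the theorem states.
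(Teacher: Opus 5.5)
Your argument is correct, but it takes a different route from the paper.

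\textbf{The paper's route.} The paper interpolates directly on $f$. It cites the Bergh--L\"ofstr\"om identity $(H^{-1},H^r)_{\theta,2}=L^2$ with $\theta=1/(r+1)$, and the exactness of the real method applied to $c\mapsto c\,u$ from $\mathbb{C}$. This gives $\|f\|_{L^2}\le C_{\mathrm{int}}\|f\|_{H^{-1}}^{r/(r+1)}\|f\|_{H^r}^{1/(r+1)}$. It then uses only the \emph{statement} of Lemma~\ref{lem:LB_Hminus1}.

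\textbf{Your route.} You drop the $H^{-1}$ norm entirely. You take the stronger intermediate bound $\|Q\|_{L^2(0,T)}\le C_0\|F(f)\|_{L^2(0,T)}$ from inside that lemma's proof; it is genuinely stronger, since $\|f\|_{H^{-1}(0,T)}\le\|Q\|_{L^2(0,T)}$ but not conversely. You then prove a Gagliardo--Nirenberg inequality for $Q=\int_0^t f$ between $L^2$ and $H^{r+1}$ via extension, Plancherel and H\"older. Your H\"older step with exponents $(r+1)/r$ and $r+1$ is right. The identity $\|Q\|_{H^{r+1}}^2=\|Q\|_{L^2}^2+\|f\|_{H^r}^2$ holds exactly with Slobodeckij norms for every $r>0$, so $\|Q\|_{H^{r+1}}\lesssim(1+T)M$ and no absorption is even needed.

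\textbf{What your route buys.} It works only with nonnegative-order spaces on $(0,T)$. You therefore never have to justify that the $\mathbb{R}^n$ interpolation identity transfers to the bounded interval with $H^{-1}(0,T)=(H_0^1(0,T))^*$. That transfer is true, but it requires an extension operator bounded simultaneously on negative and positive orders, which the paper does not discuss. You also make explicit that the affine contributions of $\zeta$, $g$, $u_0$ cancel in $F(f_1)-F(f_2)$; the paper covers this only by appealing to ``linearity of $F$''.

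\textbf{What the paper's route buys.} It is shorter and modular. It treats Lemma~\ref{lem:LB_Hminus1} as a black box, so it applies to any forward operator satisfying an $H^{-1}$ lower bound. Yours depends on the specific antiderivative estimate buried in that proof.

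Both routes yield the exponent $r/(r+1)$ and a constant of the form $C\,C_0^{r/(r+1)}M^{1/(r+1)}$.
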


\begin{proof}
According to~\cite[Theorem 6.4.5 (4) p.153]{BerghLofstrom1976}, the real interpolation between the Bessel potential spaces $H^{s_0}_p$ and $H^{s_1}_p$ is given by 
\[
(H^{s_0}_p, H^{s_1}_p)_{\theta, p} = B^{s^*}_{p, q}, \quad \text{where } s^* = (1-\theta)s_0 + \theta s_1,\quad 0<\theta<1.
\]
For the Hilbertian case $p=2$, we have for all $s \in \mathbb{R}$: $H^s_2 = H^s$ (standard Sobolev space) and $B^s_{2,2} = H^s$ (coincidence of Besov and Sobolev spaces at $p=q=2$).
By setting $s_0 = -1$ and $s_1 = r$, and choosing $\theta = \frac{1}{r+1}$ such that $s^* = 0$, we obtain
\[
(H^{-1}, H^r)_{\theta, 2} = B^0_{2,2} = L^2.
\]
Moreover, from~\cite[Eq.~(6) Chapter 2 p.27]{BerghLofstrom1976} we have the interpolation property for operators: if a linear operator $T$ maps $A_0\to B_0$ and $A_1\to B_1$, then it maps the interpolated space $A_\theta\to B_\theta$ with the norm inequality
\begin{equation}\label{eq:Interpo_ineq_operator}
\|T\|_{A_\theta \to B_\theta} \leq C_{\mathrm{int}}\, \|T\|_{A_0 \to B_0}^{1-\theta} \|T\|_{A_1 \to B_1}^\theta,
\end{equation}
for some constant $C_{\mathrm{int}}>0$. Now, set the source spaces $A_0 = A_1 = A_\theta = \mathbb{C}$ (the field of complex numbers) and 
the target spaces as $B_0 = H^{-1}$ and $B_1 = H^r$. Also, set the interpolation space $B_\theta = (H^{-1}, H^r)_{\theta, 2}$ where $\theta$ is chosen as above.

Next, for a fixed function $u \in X$, where $X$ is any Banach space, we define the linear operator $T: \mathbb{C} \to X$ by $T(c) = c \cdot u$ for $c \in \mathbb{C}$.
It follows that $\|T\|_{\mathbb{C} \to X} = \|u\|_X$. Plugging these results into the general property~\eqref{eq:Interpo_ineq_operator}, we obtain
\[
\|u\|_{L^2(0,T)}\le C_{\mathrm{int}}\,\|u\|_{H^{-1}(0,T)}^{\frac{r}{r+1}}\,\|u\|_{H^{r}(0,T)}^{\frac{1}{r+1}}.
\]
Applying this inequality to $f_1 - f_2$ yields
\[
\|f_1 - f_2\|_{L^2(0,T)}\le C_{\mathrm{int}}\,\|f_1 - f_2\|_{H^{-1}(0,T)}^{\frac{r}{r+1}}\,
\|f_1 - f_2\|_{H^r(0,T)}^{\frac{1}{r+1}}.
\]
Using the triangle inequality with~\eqref{eq:source_bound} and applying Lemma~\ref{lem:LB_Hminus1}, we obtain
\[
\|f_1 - f_2\|_{L^2(0,T)}\le C_{\mathrm{int}}\,
\bigl(C_0\|F(f_1) - F(f_2)\|_{L^2(0,T)}\bigr)^{\frac{r}{r+1}}\,
(2M)^{\frac{1}{r+1}}.
\]
where we have also used the linearity of $F$. This yields \eqref{eq:cond_holder_final} with
$\widetilde{C}:=C_{\mathrm{int}}\,C_0^{\frac{r}{r+1}}(2M)^{\frac{1}{r+1}}$.
\end{proof}

\section{Mathematical analysis of the integral feedback method}
\label{sec:feedback_analysis}

Motivated by the Volterra structure of the forward operator established in
Section~\ref{sec:cond_stab}, we now analyze the proposed adjoint-free integral
feedback reconstruction method. Rather than viewing the method merely as an
iterative numerical procedure, we formulate it as a nonlinear fixed-point
iteration on the unknown source. This viewpoint provides a natural mathematical
framework for studying the well-definedness, fixed-point properties, stability,
and convergence of the reconstruction method, while establishing a direct
connection with the analysis developed in the previous sections.

Recall that the inverse problem has been reduced to $F(f)=h^\delta,$ where $F:L^2(0,T)\to L^2(0,T)$ is the adjusted forward operator
\[
F(f)(t)=\int_\Omega u_f(t,x)\,dx,
\]
and
\[
h^\delta(t)
=
\mu^\delta(t)
-
\int_\Omega u_\zeta(t,x)\,dx
-
\int_\Omega u_g(t,x)\,dx
-
\int_\Omega u_{u_0}(t,x)\,dx
\]
denotes the adjusted noisy data obtained after removing the known contributions
of the distributed source $\zeta$, the boundary data $g$, and the initial
condition $u_0$ from the noisy observation $\mu^\delta$.

Let $f^{(k)}\in L^2(0,T)$ denote the current approximation of the unknown source.
We define the corresponding residual by
\[
r^{(k)}(t)
:=
F(f^{(k)})(t)-h^\delta(t),
\qquad t\in(0,T).
\]
The integral feedback method updates the source pointwise according to
\begin{equation}\label{eq:feedback_update_pointwise}
f^{(k+1)}(t)
=
f^{(k)}(t)
-
\frac{1}{\alpha}\,
\phi\bigl(r^{(k)}(t)\bigr),
\qquad t\in(0,T),
\end{equation}
where $\alpha>0$ is the feedback parameter and
$\phi:\mathbb{R}\to\mathbb{R}$ is the feedback function.

In classical PDE-constrained optimization methods, source reconstruction typically requires the computation of the adjoint operator $F^*$, which entails solving an additional adjoint PDE backward in time. In contrast, the proposed method avoids adjoint equations entirely and updates the source directly from the data misfit, relying only on forward solves. This adjoint-free formulation eliminates the additional adjoint solve and leads to a comparatively simple reconstruction procedure.

To formulate~\eqref{eq:feedback_update_pointwise} in operator form, let
$\Phi:L^2(0,T)\to L^2(0,T)$ denote the Nemytskii (or superposition) operator
induced pointwise by $\phi$ (see, e.g.,~\cite{AppellZabrejko1990}), namely
\[
(\Phi v)(t):=\phi(v(t)).
\]
We then define the integral feedback operator
\[
\mathcal G:L^2(0,T)\to L^2(0,T)
\]
by
\[
\mathcal G(f)
:=
f-\frac{1}{\alpha}\Phi\bigl(F(f)-h^\delta\bigr).
\]
Thus, starting from an initial guess $f^{(0)}\in L^2(0,T)$, the feedback
iteration can be written equivalently as
\[
f^{(k+1)}
=
\mathcal G(f^{(k)}),
\qquad k=0,1,2,\ldots.
\]

We assume throughout the analysis that the feedback function
$\phi:\mathbb{R}\to\mathbb{R}$ is globally Lipschitz; that is, there exists
$L_\phi>0$ such that
\[
|\phi(s)-\phi(r)|
\le
L_\phi|s-r|,
\qquad s,r\in\mathbb{R}.
\]
The global Lipschitz continuity of $\phi$ implies that
\[
\|\Phi(v_1)-\Phi(v_2)\|_{L^2(0,T)}
\le
L_\phi
\|v_1-v_2\|_{L^2(0,T)}
\]
for all $v_1,v_2\in L^2(0,T)$.

\begin{theorem}[Well-definedness and Lipschitz continuity of the feedback operator]
\label{thm:feedback_well_defined}
Let $h^\delta\in L^2(0,T)$ and $\alpha>0$, and suppose that
$\phi:\mathbb{R}\to\mathbb{R}$ is globally Lipschitz with constant
$L_\phi$. Let $F:L^2(0,T)\to L^2(0,T)$ be the bounded linear forward
operator defined in~\eqref{forward_map2}. Define
\[
\mathcal{G}:L^2(0,T)\to L^2(0,T)
\]
by
\begin{equation}\label{eq:feedback_operator}
\mathcal{G}(f)
:=
f-\frac{1}{\alpha}
\Phi\bigl(F(f)-h^\delta\bigr).
\end{equation}
Then $\mathcal{G}$ is well defined. Moreover, for every
$f_1,f_2\in L^2(0,T)$,
\begin{equation}\label{eq:G_Lipschitz}
\|\mathcal{G}(f_1)-\mathcal{G}(f_2)\|_{L^2(0,T)}
\le
\left(
1+\frac{L_\phi\|F\|_{\mathcal{L}(L^2(0,T))}}{\alpha}
\right)
\|f_1-f_2\|_{L^2(0,T)}.
\end{equation}
In particular, $\mathcal{G}$ is globally Lipschitz continuous on
$L^2(0,T)$.
\end{theorem}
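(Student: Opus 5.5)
The plan is to split the claim into two parts. First we show that $\mathcal{G}$ maps $L^2(0,T)$ into itself. Then we obtain the Lipschitz bound \eqref{eq:G_Lipschitz} from the triangle inequality, the Lipschitz property of the Nemytskii operator $\Phi$, and the boundedness and linearity of $F$ given by Lemma~\ref{lem:linearity} and Lemma~\ref{lem:bounded}.

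For well-definedness, fix $f\in L^2(0,T)$. By Lemma~\ref{lem:bounded}, $F(f)\in L^2(0,T)$, so $v:=F(f)-h^\delta\in L^2(0,T)$. The one point that needs care is that $\Phi$ actually maps $L^2(0,T)$ into $L^2(0,T)$. Global Lipschitz continuity alone does not give $\phi(0)=0$. Instead we use the pointwise bound
\[
|\phi(v(t))|\le |\phi(0)|+L_\phi|v(t)|.
\]
Since $(0,T)$ has finite measure, constants lie in $L^2(0,T)$, and therefore
\[
\|\Phi(v)\|_{L^2(0,T)}\le |\phi(0)|\,T^{1/2}+L_\phi\|v\|_{L^2(0,T)}<\infty.
\]
Measurability of $t\mapsto\phi(v(t))$ holds because $\phi$ is continuous and $v$ is measurable. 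Choosing a different representative of $v$ changes $\phi\circ v$ only on a null set, so $\Phi$ is well defined on equivalence classes. It follows that $\mathcal{G}(f)=f-\alpha^{-1}\Phi(v)\in L^2(0,T)$. I expect this to be the only genuinely delicate step, and the finiteness of $T$ is what resolves it.

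For the Lipschitz estimate, take $f_1,f_2\in L^2(0,T)$ and write
\[
\mathcal{G}(f_1)-\mathcal{G}(f_2)=(f_1-f_2)-\tfrac{1}{\alpha}\bigl(\Phi(F(f_1)-h^\delta)-\Phi(F(f_2)-h^\delta)\bigr).
\]
Apply the triangle inequality and then the Lipschitz bound for $\Phi$ stated just before the theorem. This bound follows by squaring the pointwise inequality $|\phi(s)-\phi(r)|\le L_\phi|s-r|$ and integrating over $(0,T)$. Together these give
\[
\|\mathcal{G}(f_1)-\mathcal{G}(f_2)\|_{L^2(0,T)}\le\|f_1-f_2\|_{L^2(0,T)}+\frac{L_\phi}{\alpha}\|F(f_1)-F(f_2)\|_{L^2(0,T)}.
\]
The data $h^\delta$ cancels in the difference. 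By linearity of $F$ (Lemma~\ref{lem:linearity}) we have $F(f_1)-F(f_2)=F(f_1-f_2)$, and the operator-norm bound from the remark after Lemma~\ref{lem:bounded} gives
\[
\|F(f_1-f_2)\|_{L^2(0,T)}\le\|F\|_{\mathcal{L}(L^2(0,T))}\|f_1-f_2\|_{L^2(0,T)}.
\]
Substituting this into the previous inequality yields \eqref{eq:G_Lipschitz}. Global Lipschitz continuity of $\mathcal{G}$, and hence its continuity, follows at once.
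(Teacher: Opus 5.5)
Your proposal is correct and follows the paper's proof essentially step for step. For well-definedness, both you and the paper use the pointwise bound $|\phi(s)|\le L_\phi|s|+|\phi(0)|$ together with the finite measure of $(0,T)$; for the Lipschitz estimate, both use the triangle inequality, the Lipschitz bound for $\Phi$, and the linearity and boundedness of $F$. Your explicit measurability and equivalence-class remarks are harmless refinements that the paper leaves implicit.
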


\begin{proof}
Let $f\in L^2(0,T)$. Since the forward operator
$F:L^2(0,T)\to L^2(0,T)$ is bounded, we have $F(f)\in L^2(0,T).$
Since $h^\delta\in L^2(0,T)$, it follows that $F(f)-h^\delta\in L^2(0,T).$

We first verify that the Nemytskii operator $\Phi$ maps $L^2(0,T)$
into itself. Since $\phi$ is globally Lipschitz,
\[
|\phi(s)|
\le
|\phi(s)-\phi(0)|+|\phi(0)|
\le
L_\phi |s|+|\phi(0)|
\qquad\text{for all }s\in\mathbb{R}.
\]
Hence, for every $v\in L^2(0,T)$,
\[
|\Phi(v)(t)|
=
|\phi(v(t))|
\le
L_\phi |v(t)|+|\phi(0)|.
\]
Since $T<\infty$, the constant function $t\mapsto\phi(0)$ belongs to
$L^2(0,T)$. Therefore, $\Phi(v)\in L^2(0,T).$
Taking $v=F(f)-h^\delta$, we obtain $\Phi\bigl(F(f)-h^\delta\bigr)\in L^2(0,T).$ Consequently,
\[
\mathcal{G}(f)
=
f-\frac{1}{\alpha}
\Phi\bigl(F(f)-h^\delta\bigr)
\in L^2(0,T),
\]
which proves that $\mathcal{G}$ is well defined.

We next establish the Lipschitz estimate. Let
$f_1,f_2\in L^2(0,T)$. Using~\eqref{eq:feedback_operator}, the triangle inequality, and the
Lipschitz continuity of $\Phi$, we obtain
\begin{align*}
\|\mathcal{G}(f_1)-\mathcal{G}(f_2)\|_{L^2(0,T)}
&\le
\|f_1-f_2\|_{L^2(0,T)}
+
\frac{L_\phi}{\alpha}
\|F(f_1)-F(f_2)\|_{L^2(0,T)}.
\end{align*}
By linearity and boundedness of $F$,
\[
\|F(f_1)-F(f_2)\|_{L^2(0,T)}
=
\|F(f_1-f_2)\|_{L^2(0,T)}
\le
\|F\|_{\mathcal{L}(L^2(0,T))}
\|f_1-f_2\|_{L^2(0,T)}.
\]
Hence,
\[
\|\mathcal{G}(f_1)-\mathcal{G}(f_2)\|_{L^2(0,T)}
\le
\left(
1+\frac{L_\phi\|F\|_{\mathcal{L}(L^2(0,T))}}{\alpha}
\right)
\|f_1-f_2\|_{L^2(0,T)},
\]
which proves~\eqref{eq:G_Lipschitz} and completes the proof.
\end{proof}

We next characterize the fixed points of the feedback operator. In
addition to the global Lipschitz continuity of $\phi$, we assume that
\begin{equation}\label{eq:feedback_zero}
\phi(s)=0
\quad\Longleftrightarrow\quad
s=0.
\end{equation}

\begin{theorem}[Characterization of fixed points]
\label{thm:feedback_fixed_points}
Let $\mathcal{G}$ be the feedback operator defined
in~\eqref{eq:feedback_operator}, and suppose that
$\phi$ satisfies~\eqref{eq:feedback_zero}. Then
$\tilde{f}\in L^2(0,T)$ is a fixed point of $\mathcal{G}$ if and only if
\[
F(\tilde{f})=h^\delta
\quad\text{a.e. on }(0,T).
\]
\end{theorem}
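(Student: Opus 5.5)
The plan is to unwind the fixed-point equation directly from the definition~\eqref{eq:feedback_operator} and then use the zero-set property~\eqref{eq:feedback_zero} pointwise. First, $\tilde f=\mathcal{G}(\tilde f)$ in $L^2(0,T)$ means that
\[
\frac{1}{\alpha}\Phi\bigl(F(\tilde f)-h^\delta\bigr)=0 \quad\text{in } L^2(0,T).
\]
Since $\alpha>0$, this is equivalent to $\Phi(F(\tilde f)-h^\delta)=0$ in $L^2(0,T)$. By Theorem~\ref{thm:feedback_well_defined}, $\mathcal{G}$ is a well-defined map into $L^2(0,T)$, so every expression here is a genuine element of $L^2(0,T)$. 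Equality is understood as equality a.e.

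Set $r:=F(\tilde f)-h^\delta\in L^2(0,T)$ and fix a representative of it. The condition $\Phi(r)=0$ in $L^2(0,T)$ means $\phi(r(t))=0$ for a.e.\ $t\in(0,T)$. Let $N$ be the exceptional null set. For $t\notin N$, assumption~\eqref{eq:feedback_zero} (the direction $\phi(s)=0\Rightarrow s=0$) gives $r(t)=0$. Hence $r=0$ a.e., that is, $F(\tilde f)=h^\delta$ a.e.\ on $(0,T)$.

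For the converse, suppose $F(\tilde f)=h^\delta$ a.e. Then $r(t)=0$ off a null set, and the other direction of~\eqref{eq:feedback_zero} gives $\phi(r(t))=\phi(0)=0$ there. So $\Phi(r)=0$ a.e., and therefore $\mathcal{G}(\tilde f)=\tilde f$ in $L^2(0,T)$.

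There is no real obstacle. The only point needing care is that the superposition operator is defined on equivalence classes. Changing $r$ on a null set changes $\phi\circ r$ only on a null set, so the pointwise argument is independent of the representative. I would note this in one line when passing between $L^2$ equality and the a.e.\ pointwise statements.
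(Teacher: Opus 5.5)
Your proposal is correct and follows essentially the same route as the paper: reduce $\mathcal{G}(\tilde f)=\tilde f$ to $\Phi\bigl(F(\tilde f)-h^\delta\bigr)=0$ using $\alpha>0$, then apply~\eqref{eq:feedback_zero} pointwise a.e.\ in each direction, with $\phi(0)=0$ giving the converse. Your remark that the superposition operator is independent of the choice of representative is a small extra precision that the paper leaves implicit.
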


\begin{proof}
Suppose first that $\tilde{f}$ is a fixed point of $\mathcal{G}$. Then $\mathcal{G}(\tilde{f})=\tilde{f}.$
Using the definition of $\mathcal{G}$ in~\eqref{eq:feedback_operator},
we obtain $\Phi\bigl(F(\tilde{f})-h^\delta\bigr)=0,$
which means that
\[
\phi\bigl(F(\tilde{f})(t)-h^\delta(t)\bigr)=0
\quad\text{for a.e. }t\in(0,T).
\]
By~\eqref{eq:feedback_zero}, it follows that $F(\tilde{f})(t)-h^\delta(t)=0$ for a.e. $t\in(0,T)$ and therefore $F(\tilde{f})=h^\delta.$

Conversely, suppose that $F(\tilde{f})=h^\delta.$ Then $F(\tilde{f})-h^\delta=0.$

Since~\eqref{eq:feedback_zero} implies $\phi(0)=0$, we have
$\Phi\bigl(F(\tilde{f})-h^\delta\bigr)=0$.
Therefore,
\[
\mathcal{G}(\tilde{f})
=
\tilde{f}-\frac{1}{\alpha}
\Phi\bigl(F(\tilde{f})-h^\delta\bigr)
=
\tilde{f}.
\]
Thus $\tilde{f}$ is a fixed point of $\mathcal{G}$.
\end{proof}

\begin{remark}
Theorem~\ref{thm:feedback_fixed_points} shows that the inverse problem
$F(f)=h^\delta$ can equivalently be formulated as a fixed-point
problem for the feedback operator $\mathcal{G}$. For noisy data,
however, $h^\delta$ need not belong to the range of $F$, and an exact
fixed point may therefore fail to exist. This motivates terminating
the feedback iteration according to a discrepancy principle rather
than requiring convergence to an exact solution of the noisy inverse
problem.
\end{remark}

We next examine the evolution of the data residual under the feedback
iteration. This identity will provide the starting point for the
convergence analysis.

\begin{lemma}[Residual evolution]
\label{lem:residual_evolution}
Let $h\in L^2(0,T)$ be exact data satisfying
\[
h=F(f^\dagger)
\]
for some $f^\dagger\in L^2(0,T)$. Let the sequence
$\{f^{(k)}\}_{k\geq 0}$ be generated by
\[
f^{(k+1)}
=
f^{(k)}
-\frac{1}{\alpha}
\Phi\bigl(F(f^{(k)})-h\bigr),
\]
and define the residual
\[
r^{(k)}:=F(f^{(k)})-h.
\]
Then
\begin{equation}\label{eq:residual_recursion}
r^{(k+1)}
=
r^{(k)}
-\frac{1}{\alpha}F\Phi(r^{(k)}).
\end{equation}
Moreover,
\begin{equation}\label{eq:residual_energy}
\|r^{(k+1)}\|_{L^2(0,T)}^2
=
\|r^{(k)}\|_{L^2(0,T)}^2
-\frac{2}{\alpha}
\left\langle
r^{(k)},F\Phi(r^{(k)})
\right\rangle_{L^2(0,T)}
+
\frac{1}{\alpha^2}
\|F\Phi(r^{(k)})\|_{L^2(0,T)}^2.
\end{equation}
\end{lemma}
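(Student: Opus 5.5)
The plan is a direct computation that uses only the linearity of $F$ (Lemma~\ref{lem:linearity}) and the Hilbert structure of $L^2(0,T)$. First, I will check that every quantity in the statement lies in $L^2(0,T)$. By Theorem~\ref{thm:feedback_well_defined} with $h^\delta$ replaced by $h$, each iterate $f^{(k)}$ belongs to $L^2(0,T)$. Since $F$ is bounded (Lemma~\ref{lem:bounded}), it follows that $r^{(k)}\in L^2(0,T)$. The Nemytskii argument in that proof, which uses global Lipschitz continuity of $\phi$, gives $\Phi(r^{(k)})\in L^2(0,T)$. Applying $F$ once more gives $F\Phi(r^{(k)})\in L^2(0,T)$. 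Hence the inner product and norms appearing in \eqref{eq:residual_energy} are finite.

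For the recursion \eqref{eq:residual_recursion}, I will apply $F$ to the update formula. Linearity of $F$ gives
\[
F(f^{(k+1)}) = F(f^{(k)}) - \frac{1}{\alpha}F\Phi\bigl(F(f^{(k)})-h\bigr) = F(f^{(k)}) - \frac{1}{\alpha}F\Phi(r^{(k)}).
\]
Subtracting $h$ from both sides yields $r^{(k+1)} = r^{(k)} - \frac{1}{\alpha}F\Phi(r^{(k)})$. The exact-data assumption $h=F(f^\dagger)$ is not needed for this identity. It only tells us that $r^{(k)} = F(f^{(k)}-f^\dagger)$, which I will mention but not use.

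For the energy identity \eqref{eq:residual_energy}, I will expand $\|a-b\|^2 = \|a\|^2 - 2\langle a,b\rangle + \|b\|^2$ in the real Hilbert space $L^2(0,T)$, with $a=r^{(k)}$ and $b=\frac{1}{\alpha}F\Phi(r^{(k)})$. Pulling the scalar $1/\alpha$ out of the inner product and out of the squared norm produces exactly the stated formula.

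There is no real obstacle. The only point needing care is the membership $\Phi(r^{(k)})\in L^2(0,T)$, which is what allows $F$ to be applied to it. I will take this from Theorem~\ref{thm:feedback_well_defined} rather than reprove it. I will also note that all functions are real-valued, so the cross term is $2\langle\cdot,\cdot\rangle$ and not $2\Re\langle\cdot,\cdot\rangle$.
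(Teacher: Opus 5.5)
Your proposal is correct and takes the same approach as the paper: apply $F$ to the update, use linearity and subtract $h$ to get the recursion, then expand the squared $L^2(0,T)$-norm of the difference. Your extra check that $\Phi(r^{(k)})\in L^2(0,T)$ (via Theorem~\ref{thm:feedback_well_defined}) and your remark that the hypothesis $h=F(f^\dagger)$ is never used are both accurate points the paper leaves implicit.
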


\begin{proof}
Using the feedback iteration and the linearity of $F$, we obtain
\[
r^{(k+1)}
=
F\left(
f^{(k)}
-\frac{1}{\alpha}\Phi(r^{(k)})
\right)-h
=
F(f^{(k)})-h
-\frac{1}{\alpha}F\Phi(r^{(k)})
=
r^{(k)}
-\frac{1}{\alpha}F\Phi(r^{(k)}),
\]
which proves~\eqref{eq:residual_recursion}. Taking the squared $L^2(0,T)$-norm on both sides of
\eqref{eq:residual_recursion} gives
\begin{align*}
\|r^{(k+1)}\|_{L^2(0,T)}^2
&=
\left\|
r^{(k)}
-\frac{1}{\alpha}F\Phi(r^{(k)})
\right\|_{L^2(0,T)}^2\\
&=
\|r^{(k)}\|_{L^2(0,T)}^2
-\frac{2}{\alpha}
\left\langle
r^{(k)},F\Phi(r^{(k)})
\right\rangle_{L^2(0,T)}
+
\frac{1}{\alpha^2}
\|F\Phi(r^{(k)})\|_{L^2(0,T)}^2.
\end{align*}
This proves~\eqref{eq:residual_energy}.
\end{proof}

Identity~\eqref{eq:residual_energy} shows that boundedness of the
forward operator and Lipschitz continuity of the feedback function
alone do not imply monotonic decay of the residual. Indeed, such a
conclusion requires additional control of the interaction term
\[
\left\langle r^{(k)},F\Phi(r^{(k)})\right\rangle_{L^2(0,T)}.
\]
Unlike the classical Landweber iteration, the proposed feedback method
does not involve the adjoint operator $F^*$. Consequently, the usual
Landweber descent argument is not directly available. In the following,
we exploit instead the Volterra structure of $F$ established in
Section~\ref{sec:cond_stab}.

We next investigate the stability of the feedback iteration with
respect to perturbations in the observed data. Let $h\in L^2(0,T)$
denote the exact adjusted data and let $h^\delta\in L^2(0,T)$ satisfy
\begin{equation}\label{eq:data_noise_feedback}
\|h^\delta-h\|_{L^2(0,T)}\le \delta.
\end{equation}

\begin{theorem}[Finite-iteration stability with respect to noisy data]
\label{thm:finite_iteration_stability}
Let $F:L^2(0,T)\to L^2(0,T)$ be the bounded linear forward operator,
let $\alpha>0$, and suppose that the feedback function $\phi$ is
globally Lipschitz with constant $L_\phi$. Let
$\{f^{(k)}\}_{k\geq0}$ and $\{f_\delta^{(k)}\}_{k\geq0}$ be generated,
respectively, by
\begin{equation}\label{eq:exact_feedback_iteration}
f^{(k+1)}
=
f^{(k)}
-\frac{1}{\alpha}
\Phi\bigl(F(f^{(k)})-h\bigr)
\end{equation}
and
\begin{equation}\label{eq:noisy_feedback_iteration}
f_\delta^{(k+1)}
=
f_\delta^{(k)}
-\frac{1}{\alpha}
\Phi\bigl(F(f_\delta^{(k)})-h^\delta\bigr).
\end{equation}
Assume that both iterations start from the same initial guess, $f_\delta^{(0)}=f^{(0)}.$ Then, for every $k\geq1$,
\begin{equation}\label{eq:finite_iteration_stability}
\|f_\delta^{(k)}-f^{(k)}\|_{L^2(0,T)}
\le
\frac{L_\phi\delta}{\alpha}
\sum_{j=0}^{k-1}
\left(
1+\frac{L_\phi\|F\|_{\mathcal{L}(L^2(0,T))}}{\alpha}
\right)^j.
\end{equation}
Equivalently,
\begin{equation}\label{eq:finite_iteration_stability_explicit}
\|f_\delta^{(k)}-f^{(k)}\|_{L^2(0,T)}
\le
\frac{\delta}{\|F\|_{\mathcal{L}(L^2(0,T))}}
\left[
\left(
1+\frac{L_\phi\|F\|_{\mathcal{L}(L^2(0,T))}}{\alpha}
\right)^k
-1
\right].
\end{equation}
Consequently, for every fixed $k$,
\[
f_\delta^{(k)}\longrightarrow f^{(k)}
\quad\text{in }L^2(0,T)
\qquad\text{as }\delta\to0.
\]
\end{theorem}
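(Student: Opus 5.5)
The plan is to track the error $e^{(k)}:=f_\delta^{(k)}-f^{(k)}$ and derive a one-step linear recursion for its norm, which is then unrolled. We have $e^{(0)}=0$ because the two iterations share the same initial guess. Subtracting \eqref{eq:exact_feedback_iteration} from \eqref{eq:noisy_feedback_iteration} gives
\[
e^{(k+1)}=e^{(k)}-\frac{1}{\alpha}\Bigl[\Phi\bigl(F(f_\delta^{(k)})-h^\delta\bigr)-\Phi\bigl(F(f^{(k)})-h\bigr)\Bigr].
\]

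\textbf{Step 1: one-step bound.} By the triangle inequality and the $L^2$-Lipschitz property of $\Phi$ (inherited from $\phi$, as noted before Theorem~\ref{thm:feedback_well_defined}), the bracket is bounded in norm by $L_\phi\|F(f_\delta^{(k)})-F(f^{(k)})-(h^\delta-h)\|_{L^2(0,T)}$. Linearity and boundedness of $F$ (Lemmas~\ref{lem:linearity} and~\ref{lem:bounded}) together with \eqref{eq:data_noise_feedback} then bound this by $L_\phi\bigl(\|F\|_{\mathcal{L}(L^2(0,T))}\|e^{(k)}\|_{L^2(0,T)}+\delta\bigr)$. Writing $q:=1+L_\phi\|F\|_{\mathcal{L}(L^2(0,T))}/\alpha$, this yields
\[
\|e^{(k+1)}\|_{L^2(0,T)}\le q\,\|e^{(k)}\|_{L^2(0,T)}+\frac{L_\phi\delta}{\alpha}.
\]
The argument is the same as in Theorem~\ref{thm:feedback_well_defined}, except that the data term no longer cancels.

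\textbf{Step 2: unroll.} Induction on $k$, starting from $e^{(0)}=0$, gives $\|e^{(k)}\|\le \frac{L_\phi\delta}{\alpha}\sum_{j=0}^{k-1}q^j$, which is \eqref{eq:finite_iteration_stability}. The geometric sum equals $(q^k-1)/(q-1)$ with $q-1=L_\phi\|F\|/\alpha$. Therefore the prefactor simplifies to $\delta/\|F\|$, producing \eqref{eq:finite_iteration_stability_explicit}.

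\textbf{Step 3: limit.} For fixed $k$, the right-hand side is a constant multiple of $\delta$, so $f_\delta^{(k)}\to f^{(k)}$ as $\delta\to0$.

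The only delicate point is the division by $\|F\|_{\mathcal{L}(L^2(0,T))}$ in the explicit form. This step requires $F\neq0$. That holds here: for instance, $f\equiv1$ gives $(Ff)(t)=\int_0^tK(t,s)\,ds$, which is positive for small $t$ because $K(t,t)=|\Omega|$ and $K$ is continuous. In any case, the sum form \eqref{eq:finite_iteration_stability} remains valid without this assumption. Everything else is routine.
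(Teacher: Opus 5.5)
Your proposal is correct and matches the paper's proof step for step: you derive the same one-step recursion $\|d^{(k+1)}\|\le q\|d^{(k)}\|+L_\phi\delta/\alpha$ from the Lipschitz property of $\Phi$, the linearity and boundedness of $F$, and the noise bound, then unroll it from $d^{(0)}=0$ and sum the geometric series. Your side remark that the explicit form needs $\|F\|_{\mathcal{L}(L^2(0,T))}\neq0$ is a point the paper passes over silently, and your justification is sound once you use the uniform bound on $\partial_sK$ from the Volterra-kernel lemma: it gives $K(t,s)\ge|\Omega|-C_K(t-s)$, so $F(1)>0$ on a short initial interval of positive measure.
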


\begin{proof}
Define $d^{(k)}:=f_\delta^{(k)}-f^{(k)}.$
Subtracting~\eqref{eq:exact_feedback_iteration} from
\eqref{eq:noisy_feedback_iteration} gives
\begin{align*}
d^{(k+1)}
&=
d^{(k)}
-\frac{1}{\alpha}
\Big[
\Phi\bigl(F(f_\delta^{(k)})-h^\delta\bigr)
-
\Phi\bigl(F(f^{(k)})-h\bigr)
\Big].
\end{align*}
Using the triangle inequality and the Lipschitz continuity of $\Phi$,
we obtain
\[
\|d^{(k+1)}\|_{L^2(0,T)}
\le
\|d^{(k)}\|_{L^2(0,T)}+
\frac{L_\phi}{\alpha}
\|
F(f_\delta^{(k)})-F(f^{(k)})
-(h^\delta-h)
\|_{L^2(0,T)}.
\]
Using the linearity of $F$, ~\eqref{eq:data_noise_feedback}, and the boundedness of $F$, we obtain
\begin{align*}
\|d^{(k+1)}\|_{L^2(0,T)}
&\le
\|d^{(k)}\|_{L^2(0,T)}
+
\frac{L_\phi}{\alpha}
\left(
\|F(d^{(k)})\|_{L^2(0,T)}
+\delta
\right)
\\
&\le
\left(
1+\frac{L_\phi\|F\|_{\mathcal{L}(L^2(0,T))}}{\alpha}
\right)
\|d^{(k)}\|_{L^2(0,T)}
+
\frac{L_\phi}{\alpha}\delta.
\end{align*}
Set
\[
q:=
1+\frac{L_\phi\|F\|_{\mathcal{L}(L^2(0,T))}}{\alpha}.
\]
Then
\begin{equation}\label{eq:noise_recursion}
\|d^{(k+1)}\|_{L^2(0,T)}
\le
q\|d^{(k)}\|_{L^2(0,T)}
+
\frac{L_\phi}{\alpha}\delta.
\end{equation}
Since the two iterations start from the same initial guess,
$d^{(0)}=0$. Repeated application of~\eqref{eq:noise_recursion}
therefore yields
\[
\|d^{(k)}\|_{L^2(0,T)}
\le
\frac{L_\phi\delta}{\alpha}
\sum_{j=0}^{k-1}q^j,
\]
which proves~\eqref{eq:finite_iteration_stability}.
Since
\[
q-1
=
\frac{L_\phi\|F\|_{\mathcal{L}(L^2(0,T))}}{\alpha},
\]
the finite geometric sum gives
\[
\frac{L_\phi\delta}{\alpha}
\sum_{j=0}^{k-1}q^j
=
\frac{\delta}{\|F\|_{\mathcal{L}(L^2(0,T))}}
(q^k-1),
\]
which proves~\eqref{eq:finite_iteration_stability_explicit}.

Finally, for every fixed $k$, the factor multiplying $\delta$ is
independent of $\delta$. Hence,
\[
\|f_\delta^{(k)}-f^{(k)}\|_{L^2(0,T)}
\to0
\qquad\text{as }\delta\to0.
\]
This completes the proof.
\end{proof}

Theorem~\ref{thm:finite_iteration_stability} shows that every fixed
number of feedback iterations is stable with respect to perturbations
in the data. At the same time, the upper bound in
\eqref{eq:finite_iteration_stability_explicit} grows with the iteration
index $k$, indicating the possibility of increasing noise propagation
under prolonged iteration. This provides a theoretical motivation for
terminating the reconstruction before excessive fitting of the noisy
data occurs. In the numerical implementation, this is accomplished
using the discrepancy principle.

We now establish an exact-data convergence result under an
accretivity condition on the feedback direction. This condition
ensures that the correction generated by the feedback operator is
sufficiently aligned with the current data residual.

\begin{theorem}[Exact-data convergence of the feedback iteration]
\label{thm:exact_feedback_convergence}
Let $h=F(f^\dagger)$ for some $f^\dagger\in L^2(0,T)$, and let
$\{f^{(k)}\}_{k\geq0}$ be generated by
\[
f^{(k+1)}
=
f^{(k)}
-\frac{1}{\alpha}
\Phi\bigl(F(f^{(k)})-h\bigr).
\]
Define
\[
r^{(k)}:=F(f^{(k)})-h.
\]
Assume that $\phi(0)=0$ and that $\phi$ is globally Lipschitz with
constant $L_\phi$. Suppose further that there exists $\widetilde{\gamma}>0$ such that
\begin{equation}\label{eq:feedback_accretivity}
\left\langle
r^{(k)},F\Phi(r^{(k)})
\right\rangle_{L^2(0,T)}
\ge
\widetilde{\gamma}\|r^{(k)}\|_{L^2(0,T)}^2
\end{equation}
for every $k\geq0$. If
\begin{equation}\label{eq:alpha_convergence_condition}
\alpha>
\frac{L_\phi^2
\|F\|_{\mathcal{L}(L^2(0,T))}^2}{2\widetilde{\gamma}},
\end{equation}
then there exists $\widetilde{q}\in[0,1)$ such that
\begin{equation}\label{eq:residual_geometric_decay}
\|r^{(k)}\|_{L^2(0,T)}
\le
\widetilde{q}^k\|r^{(0)}\|_{L^2(0,T)},
\end{equation}
where
\[
\widetilde{q}^2
=
1-\frac{2\widetilde{\gamma}}{\alpha}
+
\frac{L_\phi^2
\|F\|_{\mathcal{L}(L^2(0,T))}^2}{\alpha^2}.
\]

Assume, in addition, that for some $r>0$ and $M>0$,
\[
\|f^\dagger\|_{H^r(0,T)}\le M,
\qquad
\|f^{(k)}\|_{H^r(0,T)}\le M
\quad\text{for all }k\geq0.
\]
Then
\begin{equation}\label{eq:source_feedback_convergence}
\|f^{(k)}-f^\dagger\|_{L^2(0,T)}
\le
\widetilde{C}
\widetilde{q}^{k\frac{r}{r+1}}
\|r^{(0)}\|_{L^2(0,T)}^{\frac{r}{r+1}},
\end{equation}
where $\widetilde{C}>0$ is the constant in the conditional H\"older stability
estimate of Theorem~\ref{thm:conditional_stability}. In particular,
\[
f^{(k)}\to f^\dagger
\qquad\text{in }L^2(0,T)
\quad\text{as }k\to\infty.
\]
\end{theorem}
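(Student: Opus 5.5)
The plan is to start from the residual energy identity \eqref{eq:residual_energy} of Lemma~\ref{lem:residual_evolution}, which applies here because $h=F(f^\dagger)$ is exact data. The interaction term will be bounded from below by the accretivity hypothesis \eqref{eq:feedback_accretivity}. The quadratic term will be bounded from above using boundedness of $F$ together with the Lipschitz property of $\Phi$. Since $\phi(0)=0$, we have $\Phi(0)=0$, and hence
\[
\|F\Phi(r^{(k)})\|_{L^2(0,T)}\le \|F\|_{\mathcal{L}(L^2(0,T))}\|\Phi(r^{(k)})-\Phi(0)\|_{L^2(0,T)}\le L_\phi\|F\|_{\mathcal{L}(L^2(0,T))}\|r^{(k)}\|_{L^2(0,T)}.
\]
Inserting both bounds into \eqref{eq:residual_energy} gives
\[
\|r^{(k+1)}\|^2_{L^2(0,T)}\le \Bigl(1-\frac{2\widetilde\gamma}{\alpha}+\frac{L_\phi^2\|F\|^2_{\mathcal{L}(L^2(0,T))}}{\alpha^2}\Bigr)\|r^{(k)}\|^2_{L^2(0,T)}=\widetilde q^{\,2}\|r^{(k)}\|^2_{L^2(0,T)}.
\]

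Next I check that $\widetilde q^{\,2}\in[0,1)$. The upper bound $\widetilde q^{\,2}<1$ is equivalent to $L_\phi^2\|F\|^2/\alpha^2<2\widetilde\gamma/\alpha$, which is exactly condition \eqref{eq:alpha_convergence_condition}. Nonnegativity needs a separate argument, and this is the one subtle point. Cauchy--Schwarz and the bound above give
\[
\widetilde\gamma\|r^{(k)}\|^2\le\langle r^{(k)},F\Phi(r^{(k)})\rangle\le L_\phi\|F\|\,\|r^{(k)}\|^2 .
\]
If some $r^{(k)}\neq0$, this forces $\widetilde\gamma\le L_\phi\|F\|$. Then
\[
\widetilde q^{\,2}=\Bigl(1-\frac{\widetilde\gamma}{\alpha}\Bigr)^2+\frac{L_\phi^2\|F\|^2-\widetilde\gamma^2}{\alpha^2}\ge0 .
\]
In the degenerate case where some residual vanishes, all later residuals vanish by \eqref{eq:residual_recursion}. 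The claimed estimate then holds trivially for any admissible choice of $\widetilde q$, for instance replacing $\widetilde q^{\,2}$ by $\max\{\widetilde q^{\,2},0\}$.

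Once $\widetilde q\in[0,1)$ is established, taking square roots and inducting on $k$ yields \eqref{eq:residual_geometric_decay}.

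For the source convergence I apply Theorem~\ref{thm:conditional_stability} with $f_1=f^{(k)}$ and $f_2=f^\dagger$. Both satisfy the $H^r$ bound by the added assumption, and $F(f^{(k)})-F(f^\dagger)=r^{(k)}$. This gives
\[
\|f^{(k)}-f^\dagger\|_{L^2(0,T)}\le\widetilde C\|r^{(k)}\|^{r/(r+1)}_{L^2(0,T)}\le \widetilde C\,\widetilde q^{\,kr/(r+1)}\|r^{(0)}\|^{r/(r+1)}_{L^2(0,T)},
\]
which is \eqref{eq:source_feedback_convergence}. Since $\widetilde q<1$ and $r/(r+1)>0$, the right-hand side tends to zero, so $f^{(k)}\to f^\dagger$ in $L^2(0,T)$.

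The algebra is routine. The only genuine care needed is the nonnegativity of $\widetilde q^{\,2}$, which relies on the compatibility of the accretivity constant with $L_\phi\|F\|$ as shown above.
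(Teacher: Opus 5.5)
Your proof is correct and follows the paper's argument: you combine the residual energy identity of Lemma~\ref{lem:residual_evolution} with the accretivity hypothesis and the bound $\|F\Phi(r^{(k)})\|_{L^2(0,T)}\le L_\phi\|F\|_{\mathcal{L}(L^2(0,T))}\|r^{(k)}\|_{L^2(0,T)}$, and then apply the conditional H\"older estimate of Theorem~\ref{thm:conditional_stability} to $f^{(k)}$ and $f^\dagger$. Your check that $\widetilde q^{\,2}\ge 0$ is a point the paper leaves implicit, and it is cleanest stated as a dichotomy: either $r^{(0)}\neq 0$, in which case accretivity at $k=0$ together with Cauchy--Schwarz forces $\widetilde\gamma\le L_\phi\|F\|_{\mathcal{L}(L^2(0,T))}$, or $r^{(0)}=0$, in which case every residual vanishes and the estimates hold trivially.
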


\begin{proof}
By Lemma~\ref{lem:residual_evolution},
\[
\|r^{(k+1)}\|_{L^2(0,T)}^2
=
\|r^{(k)}\|_{L^2(0,T)}^2
-\frac{2}{\alpha}
\left\langle
r^{(k)},F\Phi(r^{(k)})
\right\rangle_{L^2(0,T)}
+
\frac{1}{\alpha^2}
\|F\Phi(r^{(k)})\|_{L^2(0,T)}^2.
\]
Using~\eqref{eq:feedback_accretivity}, $\phi(0)=0$, $\phi$ is globally Lipschitz, and the boundedness of $F$, we obtain
\[
\|r^{(k+1)}\|_{L^2(0,T)}^2
\le
\left[
1-\frac{2\widetilde{\gamma}}{\alpha}
+
\frac{L_\phi^2
\|F\|_{\mathcal{L}(L^2(0,T))}^2}{\alpha^2}
\right]
\|r^{(k)}\|_{L^2(0,T)}^2.
\]
Setting
\[
\widetilde{q}^2
:=
1-\frac{2\widetilde{\gamma}}{\alpha}
+
\frac{L_\phi^2
\|F\|_{\mathcal{L}(L^2(0,T))}^2}{\alpha^2},
\]
condition~\eqref{eq:alpha_convergence_condition} gives $\widetilde{q}<1$.
Consequently,
\[
\|r^{(k+1)}\|_{L^2(0,T)}
\le
\widetilde{q}\|r^{(k)}\|_{L^2(0,T)},
\]
and iteration yields~\eqref{eq:residual_geometric_decay}.

We now use the conditional H\"older stability estimate established in
Theorem~\ref{thm:conditional_stability}. Since
$f^{(k)}$ and $f^\dagger$ satisfy the prescribed $H^r$-bounds,
\[
\|f^{(k)}-f^\dagger\|_{L^2(0,T)}
\le
\widetilde{C}
\|F(f^{(k)})-F(f^\dagger)\|_{L^2(0,T)}^{\frac{r}{r+1}} = \widetilde{C}
\|r^{(k)}\|_{L^2(0,T)}^{\frac{r}{r+1}}.
\]
Using~\eqref{eq:residual_geometric_decay}, we obtain
\[
\|f^{(k)}-f^\dagger\|_{L^2(0,T)}
\le
\widetilde{C}
\widetilde{q}^{k\frac{r}{r+1}}
\|r^{(0)}\|_{L^2(0,T)}^{\frac{r}{r+1}},
\]
which proves~\eqref{eq:source_feedback_convergence}. Since $0\le \widetilde{q}<1$,
the right-hand side converges to zero as $k\to\infty$. Thus,
\[
f^{(k)}\to f^\dagger
\quad\text{in }L^2(0,T).
\]
\end{proof}

We now combine the exact-data convergence result with the
finite-iteration stability estimate to obtain an iterative
regularization result for noisy data.

\begin{theorem}[Iterative regularization by early stopping]
\label{thm:iterative_regularization}
Let $h=F(f^\dagger)$ be exact data and let $h^\delta$ satisfy
\[
\|h^\delta-h\|_{L^2(0,T)}\le\delta.
\]
Let $\{f^{(k)}\}_{k\ge0}$ and
$\{f_\delta^{(k)}\}_{k\ge0}$ denote the exact-data and noisy-data
feedback iterates, respectively, generated from the same initial
guess, $f_\delta^{(0)}=f^{(0)}.$

Assume that the hypotheses of
Theorem~\ref{thm:exact_feedback_convergence} hold, so that for some
$\widetilde q\in[0,1)$,
\[
\|f^{(k)}-f^\dagger\|_{L^2(0,T)}
\le
{\widetilde C}\,
\widetilde q^{\,k\frac{r}{r+1}}
\|r^{(0)}\|_{L^2(0,T)}^{\frac{r}{r+1}}.
\]
Define
\[
q
:=
1+
\frac{L_\phi
\|F\|_{\mathcal{L}(L^2(0,T))}}{\alpha}.
\]

Let $k_\delta\in\mathbb{N}$ be a noise-dependent stopping index
satisfying
\begin{equation}\label{eq:stopping_conditions}
k_\delta\to\infty
\qquad\text{and}\qquad
\delta\,q^{k_\delta}\to0
\quad\text{as }\delta\to0.
\end{equation}
Then
\[
f_\delta^{(k_\delta)}
\longrightarrow
f^\dagger
\qquad\text{in }L^2(0,T)
\quad\text{as }\delta\to0.
\]
More precisely,
\begin{equation}\label{eq:iterative_regularization_early_stopping_bound}
\|f_\delta^{(k_\delta)}-f^\dagger\|_{L^2(0,T)}
\le
\frac{\delta}
{\|F\|_{\mathcal{L}(L^2(0,T))}}
\left(
q^{k_\delta}-1
\right)
+
{\widetilde C}\,
\widetilde q^{\,k_\delta\frac{r}{r+1}}
\|r^{(0)}\|_{L^2(0,T)}^{\frac{r}{r+1}}.
\end{equation}
\end{theorem}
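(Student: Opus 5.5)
The plan is the standard splitting of the total error into a noise-propagation part and an approximation part. For the stopping index $k_\delta$ we write
\[
f_\delta^{(k_\delta)}-f^\dagger
=
\bigl(f_\delta^{(k_\delta)}-f^{(k_\delta)}\bigr)
+
\bigl(f^{(k_\delta)}-f^\dagger\bigr),
\]
and apply the triangle inequality in $L^2(0,T)$. Each term is then controlled by an earlier result, applied at the index $k=k_\delta$.

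For the first term, the two iterations start from the same initial guess and the data satisfy $\|h^\delta-h\|_{L^2(0,T)}\le\delta$. These are exactly the hypotheses of Theorem~\ref{thm:finite_iteration_stability}. Its explicit form~\eqref{eq:finite_iteration_stability_explicit} with $k=k_\delta$ gives
\[
\|f_\delta^{(k_\delta)}-f^{(k_\delta)}\|_{L^2(0,T)}\le \frac{\delta}{\|F\|_{\mathcal{L}(L^2(0,T))}}\bigl(q^{k_\delta}-1\bigr).
\]
Here $q$ is the same constant as in the statement. We assume $F\neq0$, which is implicit in dividing by its norm; $F\ne0$ holds since, for instance, $K(t,t)=|\Omega|>0$.

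For the second term, the exact-data iterates satisfy the hypotheses of Theorem~\ref{thm:exact_feedback_convergence}: the accretivity condition, the bound on $\alpha$, and the uniform $H^r$-bounds on $f^{(k)}$ and $f^\dagger$. Estimate~\eqref{eq:source_feedback_convergence} at $k=k_\delta$ then gives the term $\widetilde C\,\widetilde q^{\,k_\delta r/(r+1)}\|r^{(0)}\|^{r/(r+1)}$. Adding the two bounds yields~\eqref{eq:iterative_regularization_early_stopping_bound}.

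Convergence follows from the stopping conditions~\eqref{eq:stopping_conditions}. In the first term, $\delta(q^{k_\delta}-1)\le\delta q^{k_\delta}\to0$ by the second condition, with the prefactor $1/\|F\|$ fixed. In the second term, $0\le\widetilde q<1$ and $r/(r+1)>0$, so $k_\delta\to\infty$ forces $\widetilde q^{\,k_\delta r/(r+1)}\to0$ (it is identically zero if $\widetilde q=0$), while $\|r^{(0)}\|$ does not depend on $\delta$ because $r^{(0)}=F(f^{(0)})-h$ involves only exact data.

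No step is expected to be a genuine obstacle; the argument only assembles earlier results. The one point needing care is bookkeeping. The conditional-stability input is applied to the \emph{exact} iterates $f^{(k_\delta)}$, whose $H^r$-bound is assumed. The noisy iterates need no such bound, since they enter only through the Lipschitz-type stability estimate. It should also be noted that a $k_\delta$ satisfying both conditions exists, for instance $k_\delta=\lfloor \log(\delta^{-1/2})/\log q\rfloor$, so that $\delta q^{k_\delta}\le\delta^{1/2}\to0$.
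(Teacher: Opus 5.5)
Your proposal is correct and follows the paper's proof essentially verbatim. Like the paper, you split the error by the triangle inequality into a noise-propagation term and an approximation term, bound them at $k=k_\delta$ via Theorem~\ref{thm:finite_iteration_stability} (explicit form) and Theorem~\ref{thm:exact_feedback_convergence}, and send both to zero using the stopping conditions. Your side remarks are correct additions that the paper handles implicitly or in the remark after the theorem: $F\neq0$, the independence of $\|r^{(0)}\|_{L^2(0,T)}$ from $\delta$, and the existence of an admissible $k_\delta$.
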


\begin{proof}
By the triangle inequality,
\begin{align*}
\|f_\delta^{(k_\delta)}-f^\dagger\|_{L^2(0,T)}
&\le
\|f_\delta^{(k_\delta)}-f^{(k_\delta)}\|_{L^2(0,T)}
+
\|f^{(k_\delta)}-f^\dagger\|_{L^2(0,T)}.
\end{align*}

By Theorem~\ref{thm:finite_iteration_stability},
\[
\|f_\delta^{(k_\delta)}-f^{(k_\delta)}\|_{L^2(0,T)}
\le
\frac{\delta}
{\|F\|_{\mathcal{L}(L^2(0,T))}}
\left(
q^{k_\delta}-1
\right).
\]
On the other hand, Theorem~\ref{thm:exact_feedback_convergence}
gives
\[
\|f^{(k_\delta)}-f^\dagger\|_{L^2(0,T)}
\le
{\widetilde C}\,
\widetilde q^{\,k_\delta\frac{r}{r+1}}
\|r^{(0)}\|_{L^2(0,T)}^{\frac{r}{r+1}}.
\]
Combining these two estimates yields
\eqref{eq:iterative_regularization_early_stopping_bound}.
By~\eqref{eq:stopping_conditions},
\[
\delta
\left(
q^{k_\delta}-1
\right)
\longrightarrow0
\qquad\text{as }\delta\to0.
\]
Moreover, since $k_\delta\to\infty$ and
$0\le\widetilde q<1$, $\widetilde q^{\,k_\delta\frac{r}{r+1}}\longrightarrow0$ as $\delta\to0.$
Therefore both terms on the right-hand side of
\eqref{eq:iterative_regularization_early_stopping_bound} converge to zero, and hence
\[
\|f_\delta^{(k_\delta)}-f^\dagger\|_{L^2(0,T)}
\longrightarrow0
\qquad\text{as }\delta\to0.
\]
This completes the proof.
\end{proof}

\begin{remark}
The stopping conditions in~\eqref{eq:stopping_conditions} are compatible.
Indeed, for any $\vartheta\in(0,1)$, the choice
\[
k_\delta
=
\left\lfloor
\frac{\vartheta\log(1/\delta)}
{\log q}
\right\rfloor
\]
satisfies $k_\delta\to\infty$ as $\delta\to0$. Moreover, $q^{k_\delta}\le \delta^{-\vartheta},$
and hence $\delta q^{k_\delta}\le \delta^{1-\vartheta}\to0.$
Thus, admissible noise-dependent stopping indices satisfying
\eqref{eq:stopping_conditions} exist.
\end{remark}

\section{Numerical Solutions}\label{sec:numerical_results}
\subsection{Numerical Implementation} 
In this section, we present numerical simulations to illustrate our theoretical results. 

All numerical experiments are performed on the spatial domain $\Omega = [0,1]^2$ using a uniform grid with $n_x = n_y = 41$ points in each spatial direction. The time interval $[0,T]$ with $T = 1$ is discretized uniformly with $N_t = 400$ time steps, and the step size $\Delta t = T/N_t$.

All simulations are implemented in Python using standard numerical linear algebra routines.

Two test cases are examined. In the first case, we recover a smooth time-dependent source $f(t)$ from a problem admitting an analytic solution. In the second case, we reconstruct a piecewise constant source exhibiting jump discontinuities.

In both numerical examples, the known distributed source and boundary data vanish,
$\zeta=0$ and $g=0$. Although the initial condition is nonzero, its contribution to
the integral observation has zero spatial mean. Consequently,
\[
\int_\Omega u_{u_0}(t,x)\,dx=0,
\]
and hence the adjusted data discussed in Section~\ref{sec:feedback_analysis}
satisfy 
\[
h^\delta(t)=\mu^\delta(t).
\]
Accordingly, we use the measured data $\mu^\delta$ directly in the numerical
feedback iteration.

The forward problem is solved using a backward Euler scheme in time combined with a finite difference discretization in space with homogeneous Neumann boundary conditions.

For the inverse iteration, we employ the adjoint-free integral feedback method as follows.

\subsection{Temporal regularization of the feedback iteration}

We now describe the numerical implementation of the adjoint-free integral
feedback method analyzed in Section~\ref{sec:feedback_analysis}. Although early stopping provides an iterative regularization mechanism, as established in
Theorem~\ref{thm:iterative_regularization}, the presence of noisy data may still introduce oscillations in the reconstructed source. To further stabilize the numerical reconstruction, we supplement the feedback iteration with an explicit temporal smoothing step.

Given the feedback update
\[
g^{(k)}(t)
:=
f^{(k)}(t)
-
\frac{1}{\alpha}\phi\bigl(r^{(k)}(t)\bigr),
\]
we define the regularized iterate $f^{(k+1)}$ by
\[
\left(I+\lambda_{\mathrm{reg}}\mathcal{R}\right)f^{(k+1)}
=
g^{(k)},
\]
where $\lambda_{\mathrm{reg}}>0$ is the regularization parameter and
$\mathcal{R}$ is a temporal smoothing operator. In the discrete
implementation, the regularization is based on the second-difference
operator $D_2$, leading to the smoothing operator $D_2^\top D_2$.
This additional regularization suppresses temporal oscillations while the
feedback correction remains driven by the data misfit.

\subsection{Discrete integral feedback algorithm}

Let $0=t_0<t_1<\dots<t_{N_t}=T$ be a uniform partition with $\Delta t = T/N_t$.
We represent the unknown source by the time-sampled vector
\[
\mathbf f^{(k)} = \bigl(f^{(k)}(t_0),\dots,f^{(k)}(t_{N_t})\bigr)^\top.
\]
Given $\mathbf f^{(k)}$, we compute the corresponding state $u^{(k)}$ by a time-stepping
scheme and then update the source by an integral feedback law. In our implementation, we discretize the model problem
\[
u_t - \Delta u + u = f(t)
\]
with homogeneous Neumann boundary conditions using a backward Euler scheme in time and a
finite difference discretization in space. At each time step $t_n$, this leads to a linear system
of the form
\[
A\,\mathbf u^{(k)}_n = \mathbf u^{(k)}_{n-1} + \Delta t\, f^{(k)}_n \mathbf{1},
\]
where $A$ is the discrete elliptic operator, $\mathbf{1}$ denotes the vector of ones, and $\mathbf u^{(k)}_{n}$ is the solution over space (state vector) at time $t_n$ for iteration $k$.

The integral observation is approximated by
\[
F^{(k)}_n := \int_\Omega u^{(k)}(x,t_n)\,dx,
\]
which, in the discrete setting, corresponds to a weighted sum over the spatial grid.

The residual is defined as
\[
r^{(k)}_n := F^{(k)}_n - \mu^\delta_n,
\qquad n=0,\dots,N_t,
\]
where $\mu^\delta_n := \mu^\delta(t_n)$ denotes the sampled noisy data. 

The discrete integral feedback update (prior to regularization) is given by
\[
g^{(k)}_n := f^{(k)}_n - \frac{1}{\alpha}\,
\phi\bigl(r^{(k)}_n\bigr),
\qquad n=0,\dots,N_t.
\]
We denote the corresponding feedback vector by
\[
\mathbf g^{(k)}
:=
\bigl(g^{(k)}_0,\dots,g^{(k)}_{N_t}\bigr)^\top.
\]

Let
$D_2\in\mathbb{R}^{(N_t-1)\times(N_t+1)}$ denote the
second-difference operator defined by
\[
(D_2 f)_i = f_i - 2f_{i+1} + f_{i+2},
\qquad i=0,\dots,N_t-2.
\]

The regularized iterate $\mathbf f^{(k+1)}$ is then defined as the
solution of
\[
\left(I+\lambda_{\mathrm{reg}}D_2^\top D_2\right)
\mathbf f^{(k+1)}
=
\mathbf g^{(k)}.
\]
This penalizes the temporal curvature and suppresses oscillations in
the reconstructed source.

\subsection{Noise model}\label{subsec:noise_model}
We define the discrete noiseless data by $\mu_n:=\mu(t_n)$. To simulate measurement noise, we consider additive Gaussian perturbations.
Let $(\eta_n)_{n=0}^{N_t}$ be independent standard normal random variables. Given a prescribed
relative noise level $\delta\in(0,1)$, we define
\[
\mu^\delta = \mu + \delta\,\|\mu\|_{L^2_\Delta(0,T)}\,\frac{\eta}{\|\eta\|_{L^2_\Delta(0,T)}},
\]
where the discrete norm is defined by
\begin{equation}\label{eq:discrete_L2}
\|v\|_{L^2_\Delta(0,T)} :=
\left(\Delta t\sum_{n=0}^{N_t} |v_n|^2\right)^{1/2}.
\end{equation}
By construction, this yields
\[
\frac{\|\mu^\delta-\mu\|_{L^2_\Delta(0,T)}}{\|\mu\|_{L^2_\Delta(0,T)}}=\delta.
\]

\subsection{Stopping rule (discrepancy principle)}\label{subsec:stopping_rule}
We measure the size of the residual $r^{{(k)}}$ using the discrete $L^2$-norm~\eqref{eq:discrete_L2}.
Let
\[
\delta_\Delta := \|\mu^\delta-\mu\|_{L^2_\Delta(0,T)}.
\]
We terminate the iteration by the discrepancy principle: for a fixed $\tau>1$, we stop at the first
index $k_*$ such that
\[
\|r^{(k_*)}\|_{L^2_\Delta(0,T)} \le \tau\,\delta_\Delta.
\]

\medskip
\begin{algorithm}[H]
\caption{Regularized Integral Feedback Method (adjoint-free)}
\label{alg:IFM}
\DontPrintSemicolon

\KwIn{
Time grid $\{t_n\}$, noisy data $\{\mu^\delta_n\}$, initial guess $\{f^{(0)}_n\}$,
parameters $\alpha>0$, $\lambda_{\mathrm{reg}}\ge 0$, tolerance $\tau>1$,
noise level $\delta_\Delta$, maximum iterations $k_{\max}$.
}

\For{$k=0,\dots,k_{\max}-1$}{
Solve the forward problem to obtain $u^{(k)}$\;

Compute $F^{(k)}_n = \int_\Omega u^{(k)}(x,t_n)\,dx$\;

Set $r^{(k)}_n = F^{(k)}_n - \mu^\delta_n$\;

\If{$\|r^{(k)}\|_{L^2_\Delta} \le \tau\,\delta_\Delta$}{
break\;
}

Compute feedback update prior to regularization
\[
g^{(k)}_n = f^{(k)}_n - \frac{1}{\alpha}\phi(r^{(k)}_n)
\]

\If{$\lambda_{\mathrm{reg}}>0$}{
Solve
\[
(I+\lambda_{\mathrm{reg}}D_2^\top D_2)\mathbf f^{(k+1)} = \mathbf g^{(k)}
\]
}
\Else{
$f^{(k+1)} = g^{(k)}$
}
}
\end{algorithm}

\subsection{Error metrics and reported quantities}\label{subsec:error_metrics}

We quantify the smoothness of the recovered source via the curvature seminorm
\[
\|D_2 f\|_{\ell^2}
=
\left(\sum_{i=0}^{N_t-2} |f_i - 2f_{i+1} + f_{i+2}|^2\right)^{1/2}.
\]
This quantity measures oscillations in time and provides an indicator of the regularization effect. We report the relative source error
\[
\mathrm{RelErr}_f^{(k)} :=
\frac{\|f^{(k)}-f^\dagger\|_{L^2_\Delta(0,T)}}{\|f^\dagger\|_{L^2_\Delta(0,T)}},
\]
together with the curvature and the stopping index $k_*$. Here, $f^\dagger$ is the true source function.

\subsection{Parameter selection}

The performance of the method depends on the choice of the feedback parameter $\alpha$
and the regularization parameter $\lambda_{\mathrm{reg}}$. In our experiments, these parameters are selected automatically over discrete grids. For each noise level, we consider
\[
\alpha \in \{3,5,7,10\}, \qquad
\lambda_{\mathrm{reg}} \in \{10^{-5},\dots,10\}.
\]

Among all reconstructions satisfying the discrepancy principle, we select the one with minimal curvature
\[
\|D_2 f^{(k_*)}\|_{\ell^2}.
\]
This strategy is designed to balance data fidelity and smoothness.

We use the discrepancy parameter $\tau = 1.05$, the maximum number of iterations $k_{\max} = 600$, and the initial guess $f^{(0)} \equiv 0$. 

We employ the nonlinear feedback function 
\[
\phi(s)=\tanh(s), 
\]
which provides a natural saturation mechanism by damping large residuals, since $\tanh(s)\to\pm1$ as $|s|\to\infty$, while preserving sensitivity to small discrepancies, since $\tanh(s)\approx s$ for small $|s|$. This bounded nonlinear response helps prevent excessively large feedback corrections in the presence of noisy data.

\begin{remark}
The feedback function used in the numerical experiments,
$\phi(s)=\tanh(s)$, satisfies
\[
\phi(0)=0,
\qquad
|\phi'(s)|=\operatorname{sech}^2(s)\le 1,
\]
and is therefore globally Lipschitz with $L_\phi=1$. Moreover,
\[
\tanh(s)=0
\quad\Longleftrightarrow\quad
s=0.
\]
Hence, the assumptions on the feedback function in Theorems~\ref{thm:feedback_well_defined}
and~\ref{thm:feedback_fixed_points}
are satisfied by the feedback function employed in the numerical simulations.
\end{remark}

\subsection{A problem with analytic solution}
\subsubsection{Problem setup}
In this section, we approximate the solution of the parabolic problem~\eqref{parab2}-\eqref{bc2} subject to~\eqref{forward_map2}.
For simplicity, we choose constant coefficients
\[
a(t,x)=1,\qquad b(t,x)\equiv 0,\qquad c(t,x)=1.
\]
Other configurations, such as vanishing convection or reaction terms, could also be considered,
but diffusion is retained in order to preserve the parabolic structure of the model.

We first construct an analytic solution
to this inverse problem in order to generate synthetic data, and then apply the proposed integral feedback algorithm to recover the unknown source.

To this end, consider
\[
\partial_t u-\Delta u + u = f(t)
\quad\text{in }(0,T)\times\Omega,\qquad
\partial_\nu u=0\ \text{on }(0,T)\times\partial\Omega.
\]
Let $\psi(x,y)=\cos(\pi x)\cos(\pi y)$. Then $\partial_\nu\psi=0$ on $\partial\Omega$ and
$-\Delta\psi=\lambda\psi$ with $\lambda=2\pi^2$. Moreover,
\[
\int_\Omega \psi(x,y)\,dx\,dy
=\Bigl(\int_0^1 \cos(\pi x)\,dx\Bigr)\Bigl(\int_0^1 \cos(\pi y)\,dy\Bigr)=0.
\]
We seek a solution of the form
\[
u(t,x,y)=m(t)+w(t)\psi(x,y).
\]
Substituting into the PDE yields
\[
m'(t)+m(t)=f(t),\qquad w'(t)+(\lambda+1)w(t)=0.
\]
Choosing $f(t)=\sin(2\pi t)$, $m(0)=0$, and $w(0)=1$, we obtain
\[
m(t)=\frac{\sin(2\pi t)-2\pi\cos(2\pi t)+2\pi e^{-t}}{1+4\pi^2},
\qquad
w(t)=e^{-(2\pi^2+1)t}.
\]
Hence an explicit spatially dependent solution is
\[
u(t,x,y)=
\frac{\sin(2\pi t)-2\pi\cos(2\pi t)+2\pi e^{-t}}{1+4\pi^2}
+
e^{-(2\pi^2+1)t}\cos(\pi x)\cos(\pi y).
\]
Finally, the corresponding integral observation satisfies
\begin{equation}\label{eq:observed_data}
(Ff)(t)=\int_\Omega u(t,x,y)\,dx\,dy
=|\Omega|\,m(t)
=
\frac{\sin(2\pi t)-2\pi\cos(2\pi t)+2\pi e^{-t}}{1+4\pi^2}.
\end{equation}

\subsubsection{Results}
Figure~\ref{fig:unregularized_sources} shows the true source $f(t)$ and the noisy reconstructions with $1\%$, $3\%$, and $5\%$ noise added and no regularization. 
\begin{figure}[htbp]
\centering
\includegraphics[width =\textwidth]{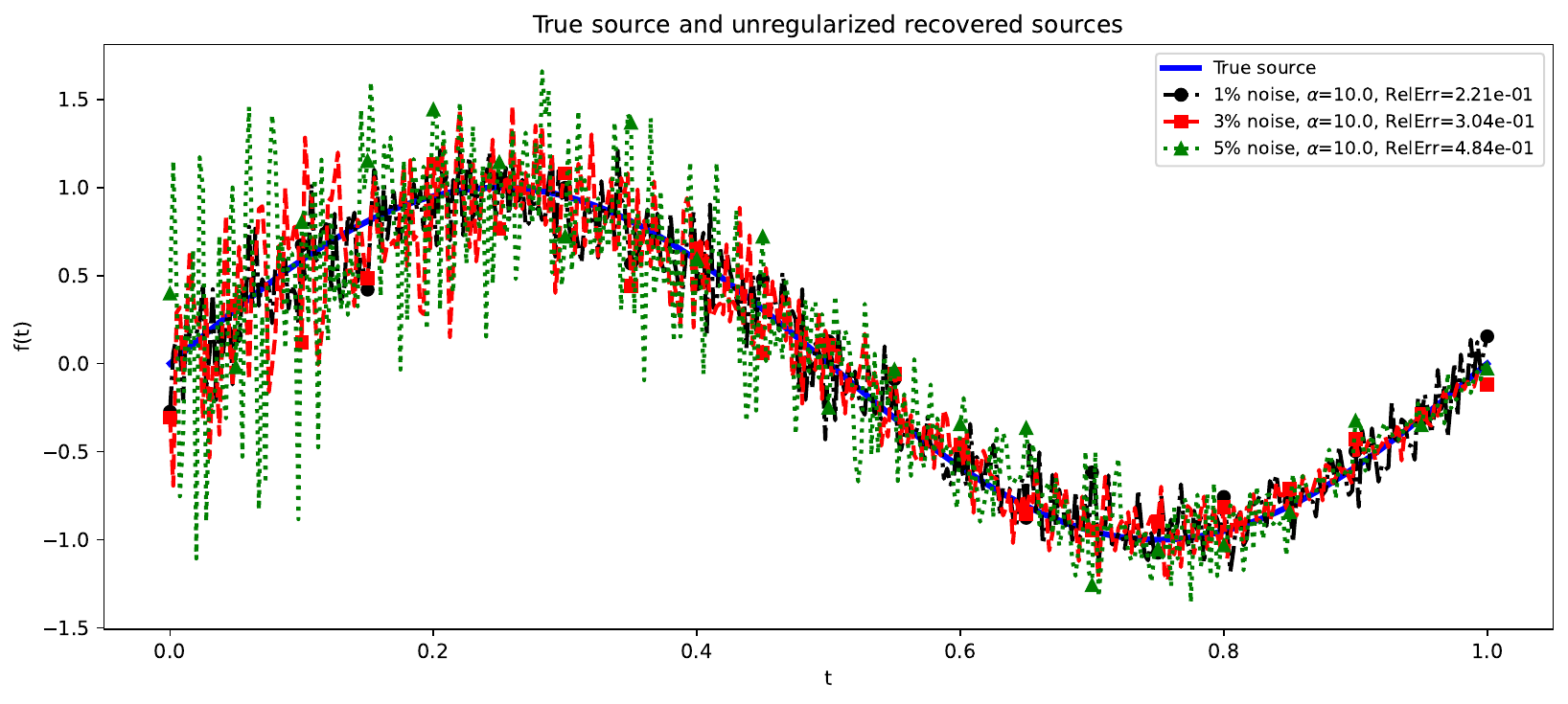}
\caption{True analytic source and unregularized reconstructions with $1\%$, $3\%$, and $5\%$ noise.}
\label{fig:unregularized_sources}
\end{figure}

\newpage
On the other hand, in Figure~\ref{fig:regularized_sources} we plot the true source and regularized sources corresponding to the noisy unregularized sources in Figure~\ref{fig:unregularized_sources}.
\begin{figure}[hbtp]
\centering
\includegraphics[width =\textwidth]{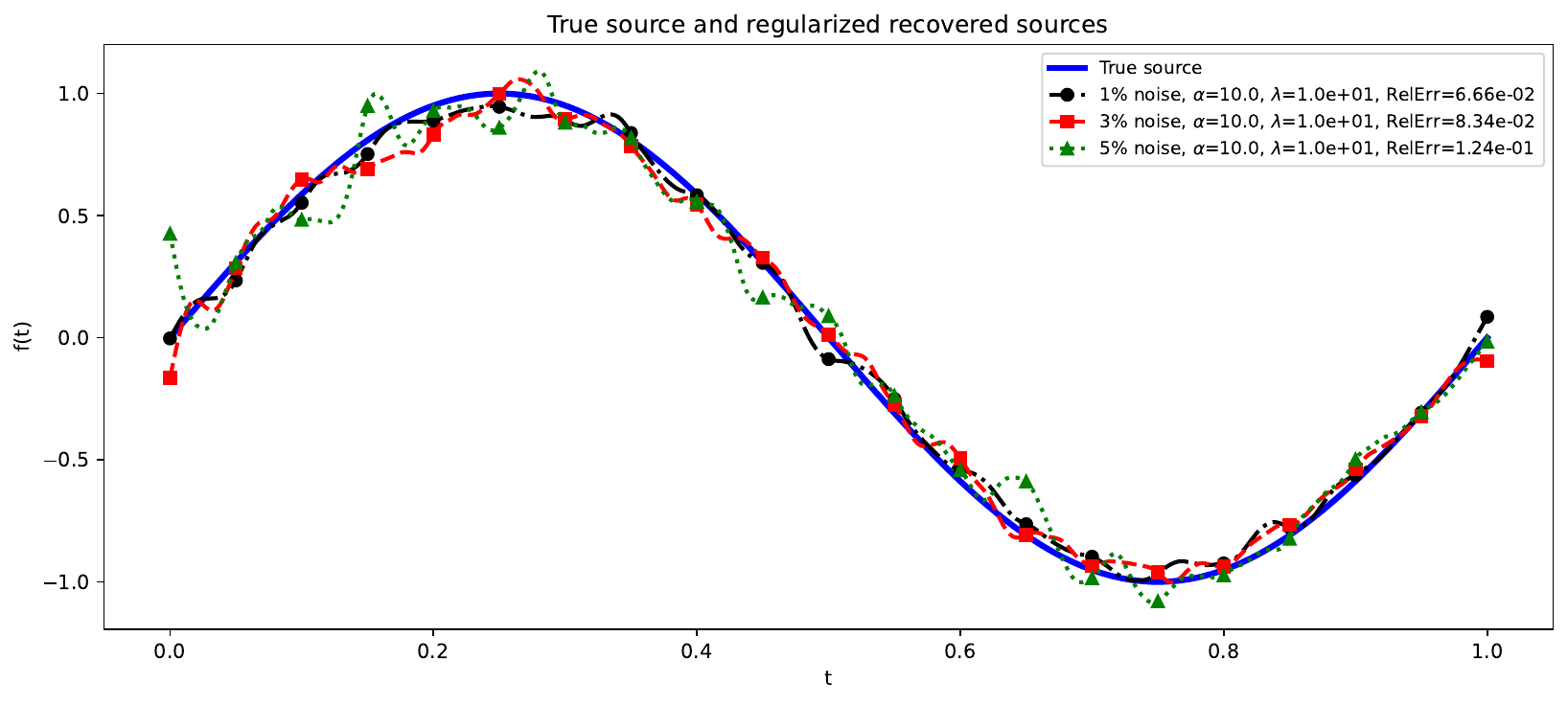}
\caption{True analytic source and regularized reconstructions with $1\%$, $3\%$, and $5\%$ noise}
\label{fig:regularized_sources}
\end{figure}

We note that the chosen source $f(t)=\sin(2\pi t)$ changes sign on $[0,T]$, which allows us to assess the ability of the method to reconstruct both positive and negative source contributions.

Table~\ref{tab:smooth_source_results} provides the values of the feedback parameter $\alpha$, the regularization parameter $\lambda_{\mathrm{reg}}$, the stopping index $k_\ast$, the relative error and curvature for the reconstructions.

\begin{table}[H]
\centering
\caption{Reconstruction results for the smooth source using parameter selection over $(\alpha,\lambda_{\mathrm{reg}})$.}
\label{tab:smooth_source_results}
\begin{tabular}{c c c c c c}
\hline
Noise & $\alpha$ & $\lambda_{\mathrm{reg}}$ & $k_\ast$ & RelErr$_f$ & Curvature \\ 
\hline
\multicolumn{6}{c}{\textbf{Unregularized reconstructions}} \\
\hline
1\% & 10.000 & $0$ & 40 & $2.21\times 10^{-1}$ & $8.01$ \\
3\% & 10.000 & $0$ & 23 & $3.04\times 10^{-1}$ & $9.45$ \\
5\% & 10.000 & $0$ & 23 & $4.84\times 10^{-1}$ & $16.39$ \\
\hline
\multicolumn{6}{c}{\textbf{Regularized reconstructions (automatic $(\alpha,\lambda)$ selection)}} \\ 
\hline
1\% & 10.000 & $10.0$ & 54 & $6.66\times 10^{-2}$ & $3.52\times 10^{-2}$ \\
3\% & 10.000 & $10.0$ & 24 & $8.34\times 10^{-2}$ & $6.66\times 10^{-2}$ \\
5\% & 10.000 & $10.0$ & 23 & $1.24\times 10^{-1}$ & $9.79\times 10^{-2}$ \\
\hline
\end{tabular}
\end{table}

We observe that, in both the regularized and unregularized cases, the stopping index $k_\ast$ generally increases as the noise level decreases. This behavior is consistent with the iterative-regularization framework of Theorem~\ref{thm:iterative_regularization}, in which the number of iterations is allowed to increase as the noise level tends to zero.

The results demonstrate a substantial improvement in reconstruction accuracy when both the feedback parameter $\alpha$ and the regularization parameter $\lambda_{\mathrm{reg}}$ are selected automatically. In particular, the relative $L^2(0,T)$ error decreases significantly across all noise levels.

Moreover, the curvature of the reconstructed source is drastically reduced, indicating that the second-order temporal regularization effectively suppresses oscillations. Interestingly, the parameter selection procedure consistently selects $\alpha=10$ and $\lambda_{\mathrm{reg}}=10$ across all noise levels, suggesting that stronger damping in the feedback iteration combined with stronger smoothing yields robust and accurate reconstructions.

\subsection{A problem with no analytic solution: recovering a piecewise constant source with jump discontinuities}

\subsubsection{Problem setup}
We now consider the recovery of a non-smooth source exhibiting jump discontinuities. This test case is designed to assess the robustness of the proposed method in the presence of reduced regularity. We define the true source function $f^\dagger:[0,T]\to\mathbb{R}$ by
\[
f^\dagger(t) =
\begin{cases}
1, & 0 \le t < 0.3, \\
-0.5, & 0.3 \le t < 0.7, \\
0.8, & 0.7 \le t \le T.
\end{cases}
\]

In contrast to the previous example, the inverse problem has no analytic solution. The corresponding observation data are therefore generated numerically by solving the forward problem and computing
\[
\mu(t) = \int_\Omega u^\dagger(x,t)\,dx,
\]
where $u^\dagger$ denotes the solution associated with the true source $f^\dagger$.

Noisy measurements $\mu^\delta$ are constructed as described in Section~\ref{subsec:noise_model}. All other aspects of the numerical setup are kept identical to those used in the smooth-source example, including the initial condition, homogeneous Neumann boundary conditions, discretization parameters, stopping criterion, and the automatic selection of the feedback and regularization parameters. This ensures a consistent and fair comparison between the smooth and non-smooth cases.

\subsubsection{Results}
Figure~\ref{fig:unregularized_piecewise} shows the true piecewise constant source and the noisy reconstructions with $1\%$, $3\%$, and $5\%$ noise added and no regularization. 

\begin{figure}[htbp]
\centering
\includegraphics[width =\textwidth]{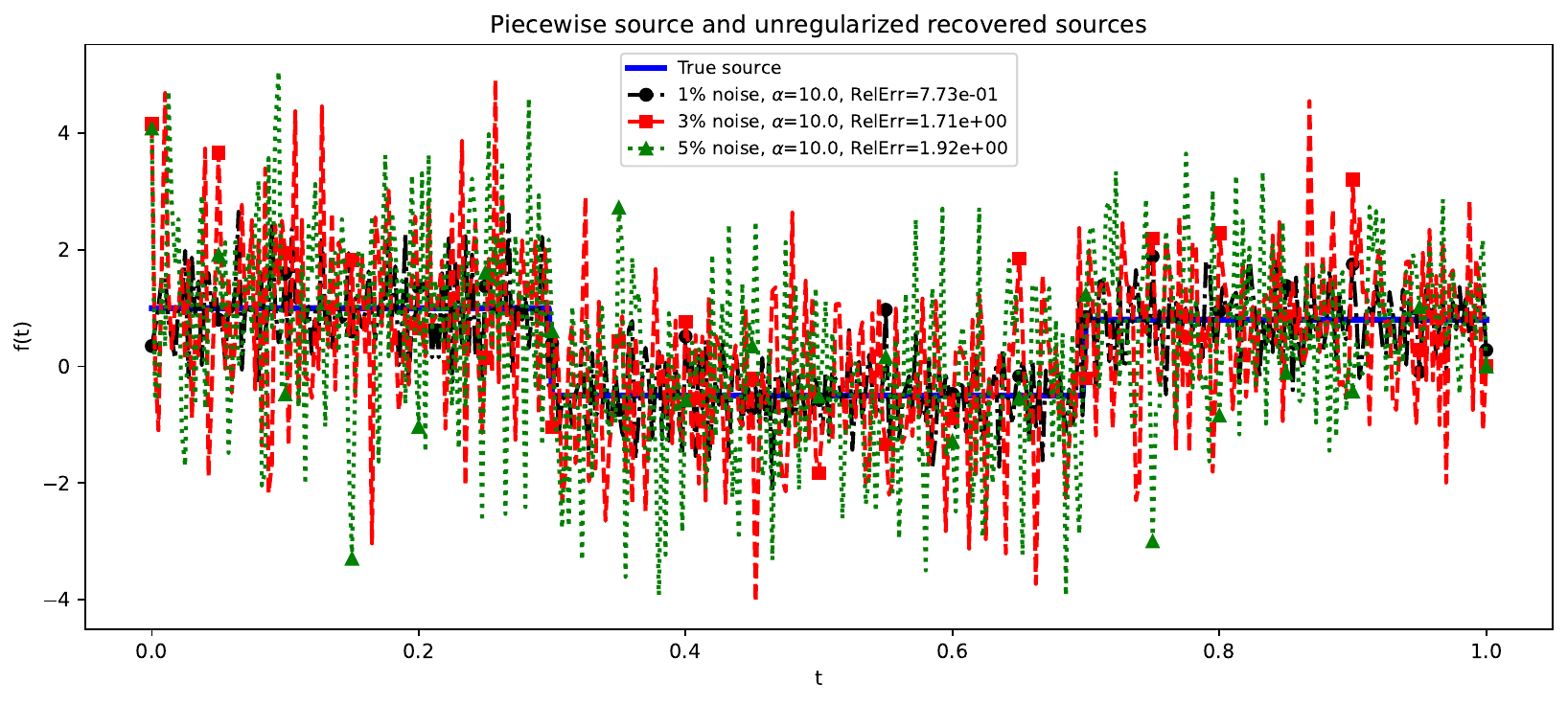}
\caption{True piecewise constant source and unregularized reconstructions for noise levels $1\%$, $3\%$, and $5\%$. The reconstructed sources exhibit strong oscillations and poor stability.}
\label{fig:unregularized_piecewise}
\end{figure}

On the other hand, in Figure~\ref{fig:regularized_piecewise} we plot the true piecewise constant source and regularized reconstructions corresponding to the noisy unregularized sources in Figure~\ref{fig:unregularized_piecewise}.
\begin{figure}[htbp]
\centering
\includegraphics[width =\textwidth]{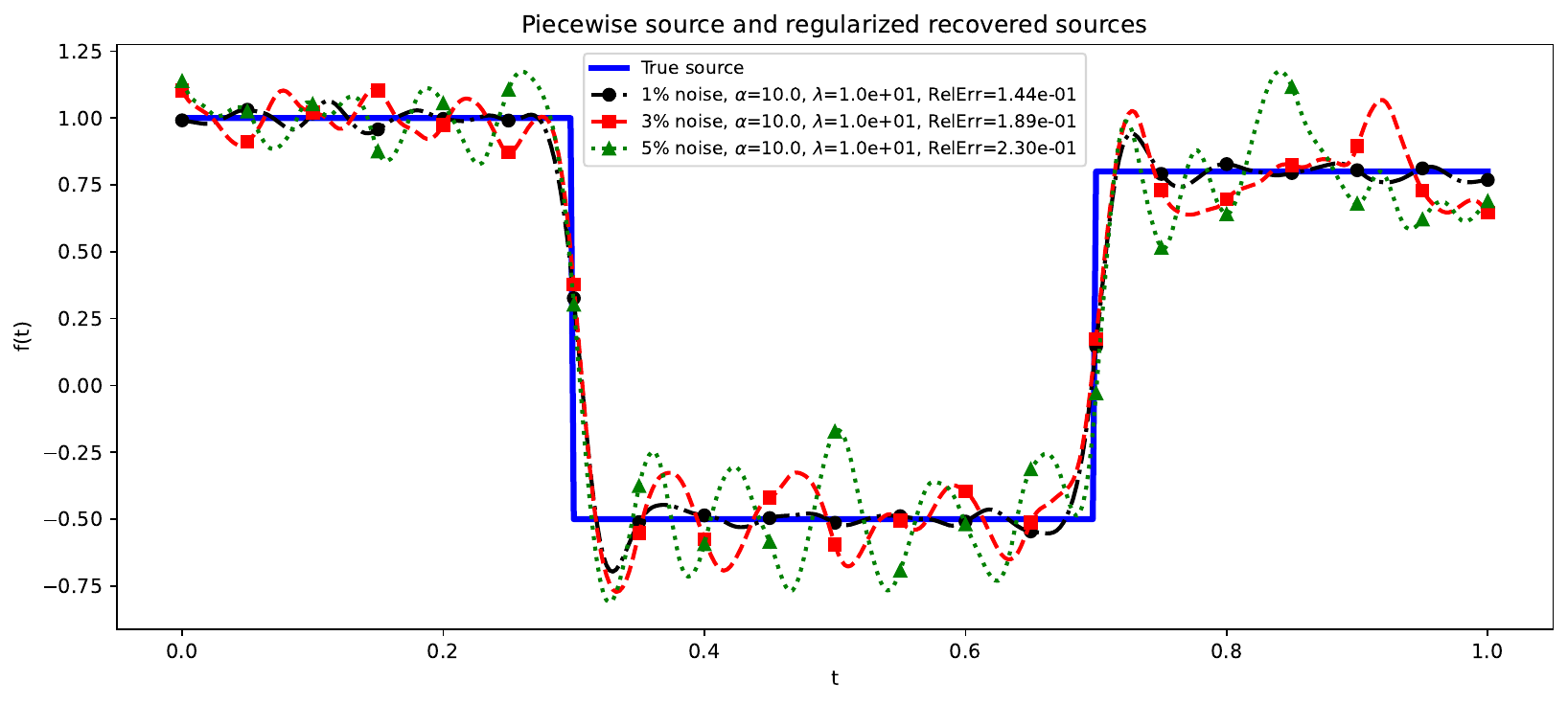}
\caption{True piecewise constant source and regularized reconstructions for noise levels $1\%$, $3\%$, and $5\%$. The regularized integral feedback method captures the main structure of the source.}
\label{fig:regularized_piecewise}
\end{figure}

Table~\ref{tab:piecewise_source_results} provides the values of the feedback parameter $\alpha$, the regularization parameter $\lambda_{\mathrm{reg}}$, the stopping index $k_\ast$, the relative error and curvature for the reconstructions.

\begin{table}[H]
\centering
\caption{Reconstruction results for the piecewise constant source example using parameter selection over $(\alpha,\lambda_{\mathrm{reg}})$.}
\label{tab:piecewise_source_results}
\begin{tabular}{c c c c c c}
\hline
Noise & $\alpha$ & $\lambda_{\mathrm{reg}}$ & $k_\ast$ & RelErr$_f$ & Curvature \\
\hline
\multicolumn{6}{c}{\textbf{Unregularized reconstructions}} \\
\hline
1\% & 10.000 & $0$ & 202 & $7.73\times 10^{-1}$ & $34.04$ \\
3\% & 10.000 & $0$ & 137 & $1.71$ & $67.77$ \\
5\% & 10.000 & $0$ & 93  & $1.92$ & $73.29$ \\
\hline
\multicolumn{6}{c}{\textbf{Regularized reconstructions (automatic $(\alpha,\lambda)$ selection)}} \\
\hline
1\% & 10.000 & $10.0$ & 405 & $1.44\times 10^{-1}$ & $8.50\times 10^{-2}$ \\
3\% & 10.000 & $10.0$ & 184 & $1.89\times 10^{-1}$ & $1.19\times 10^{-1}$ \\
5\% & 10.000 & $10.0$ & 150 & $2.30\times 10^{-1}$ & $1.88\times 10^{-1}$ \\
\hline
\end{tabular}
\end{table}

The same trend is observed for the piecewise constant source: the stopping index generally increases as the noise level decreases, again consistent with the iterative-regularization framework of Theorem~\ref{thm:iterative_regularization}.

The piecewise constant source example provides a significantly more challenging test for the proposed method due to the presence of jump discontinuities and reduced regularity of the unknown source. As expected, the unregularized reconstructions are highly oscillatory and unstable, particularly as the noise level increases. In contrast, the regularized integral feedback method produces stable and accurate reconstructions across all noise levels.

In particular, the relative $L^2(0,T)$ error decreases substantially for all noise levels, while the curvature of the recovered source is drastically reduced. This demonstrates that the second-order temporal regularization effectively suppresses oscillations while preserving the main structure of the piecewise constant source.

These results confirm that the proposed adjoint-free integral feedback method remains robust even when the true source is discontinuous, provided that an appropriate temporal regularization is incorporated.

\begin{remark}
For piecewise constant sources, a first-order temporal regularization is more naturally aligned with the structure of the unknown. We tested this approach and observed comparable performance, with slight improvements at higher noise levels. However, to maintain a unified framework across both smooth and nonsmooth cases, we retain the second-order regularization throughout the paper.
\end{remark}

\section{Conclusion and Future Work}
In this work, we studied an inverse source problem for a linear non-autonomous
parabolic equation in which the unknown source depends only on time and is
recovered from an integral observation of the solution. After reducing the
original problem to an equivalent linear inverse problem, we established the
existence and uniqueness of the Tikhonov-regularized solution and derived its
first-order optimality condition. We also showed that the forward operator
admits a Volterra representation in time. This structure yields a weak-norm
stability estimate in $H^{-1}(0,T)$ and, under an a priori $H^r(0,T)$ bound on
the source, a conditional H\"older stability estimate in $L^2(0,T)$.

A central contribution of the work is the mathematical analysis of the proposed
adjoint-free integral feedback method. By formulating the reconstruction
procedure as a nonlinear fixed-point iteration, we established its
well-definedness and Lipschitz continuity, characterized exact solutions as
fixed points, and analyzed the behavior of both exact- and noisy-data
iterations. In particular, we obtained convergence of the exact-data iteration
under suitable assumptions and a finite-iteration stability estimate with
respect to data perturbations. Combining these results, we showed that the
feedback iteration equipped with an appropriate noise-dependent stopping rule
constitutes an iterative regularization method.

The theoretical analysis is complemented by numerical experiments for smooth
and piecewise constant sources. To further control oscillations caused by noisy
data, the numerical implementation supplements early stopping with second-order
temporal regularization and automatic parameter selection. The experiments
demonstrate stable and accurate reconstructions across the tested noise levels
and show that the method remains effective for sources with jump
discontinuities.

Future work includes extending the integral feedback approach to
space-dependent inverse source problems, investigating alternative
regularization strategies adapted to different source structures, and
developing sharper convergence results and convergence-rate estimates for the
feedback iteration under suitable source conditions.

\bibliographystyle{plain}
\bibliography{Sample}

\end{document}